\setlist[enumerate]{label={\rm(\roman*)},leftmargin=6ex}
\newcommand{\R}{\mathbb{R}}
\newcommand{\rn}{\mathbb{R}^n}
\newcommand{\N}{\mathbb{N}}
\newcommand{\RR}{\mathcal{R}}
\renewcommand{\d}{{\fam0 d}}
\newcommand{\essinf}{\operatornamewithlimits{ess\,inf}}
\newcommand{\rpar}{r}
\let\tilde\widetilde
\DeclarePairedDelimiter\nrm{\lVert}{\rVert}
\def\bmc{\alpha}
\DeclareRobustCommand\onedot{\futurelet\@let@token\@onedot}
\def\@onedot{\ifx\@let@token.\else.\null\fi\xspace}
\def\eg{e.g\onedot} 
\def\ie{i.e\onedot} 
\def\cf{cf\onedot} 
\newtheoremstyle{MyPlain}{}{}{\itshape}{}{\bfseries}{.}{5pt plus 4pt minus 3pt}{\thmname{#1}\thmnumber{ #2}\thmnote{ \textbf{[#3]}}}
\theoremstyle{MyPlain}
\newtheorem{theorem}{Theorem}[section]
\newtheorem{lemma}[theorem]{Lemma}
\newtheorem{corollary}[theorem]{Corollary}
\newtheoremstyle{MyRemark}{}{}{\upshape}{}{\bfseries}{.}{5pt plus 1pt minus 1pt}{}
\theoremstyle{MyRemark}
\newtheorem*{definition}{Definition}
\newtheorem{remark}[theorem]{Remark}
\numberwithin{equation}{section}
\let\expandafter\oldproof\csname\string\proof\endcsname
\let\oldendproof\endproof
\renewenvironment{proof}[1][\proofname]{%
  \oldproof[{{\bf #1.}}]%
}{\oldendproof}
\def\paragraph{\medskip\@startsection{paragraph}{4}%
  \z@\z@{-\fontdimen2\font}%
  {\normalfont\bfseries}}
\newcommand{\tff}{\tilde{\varphi}}
\def\en{\mathbb N}
\def\vint{-\kern-11pt\int}
\newcommand{\RRR}{\mathcal R}
\DeclareMathOperator{\spt}{supp}
\begin{document}

\title{Maximal noncompactness of embeddings into Marcinkiewicz spaces}

\begin{abstract}
We develop a new functional-analytic technique for investigating the degree of noncompactness of an operator defined on a quasinormed space and taking values in a Marcinkiewicz space.
The main result is a general principle  from which it can be derived  that such operators are almost always maximally noncompact in the sense that their ball measure of noncompactness coincides with their operator norm.
We point out specifications of the universal principle to the case of the identity operator.
\looseness=-1
\end{abstract}

\author{Jan Mal\'y\textsuperscript{1}}
\address{\textsuperscript{1}%
Department of Mathematical Analysis,
Faculty of Mathematics and Physics,
Charles University,
So\-ko\-lo\-vsk\'a~83,
186~75 Praha~8,
Czech Republic}

\author{Zden\v{e}k Mihula\textsuperscript{2}}
\address{\textsuperscript{2}%
Department of Mathematics,
Faculty of Electrical Engineering,
Czech Technical University in Prague,
Technick\'a~2,
166~27 Praha~6,
Czech Republic}

\author{V\'\i t Musil\textsuperscript{3}}
\address{\textsuperscript{3}%
Masaryk University,
Faculty of Informatics,
Department of Computer Science,
Botanick\'a 68a,
602 00 Brno,
Czech Republic}

\author{Lubo\v s Pick\textsuperscript{1}}

\email[Z.\,Mihula, corresponding author]{mihulzde@fel.cvut.cz}

\urladdr[J.\,Mal\'{y}]{0000-0002-3615-9879}
\urladdr[Z.\,Mihula]{0000-0001-6962-7635}
\urladdr[V.\,Musil]{0000-0001-6083-227X}
\urladdr[L.\,Pick]{0000-0002-3584-1454}

\date{\today}

\subjclass[2020]{41A46, 46E30, 46E35}
\keywords{%
Ball measure of noncompactness, maximal noncompactness, Marcinkiewicz spaces}

\thanks{This research was partly funded by Czech Science Foundation grants 18-00580S and 23-04720S. We thank the referee for their critical reading of the paper}

\maketitle

\bibliographystyle{abbrvnat}

\makeatletter
   \providecommand\@dotsep{2}
\makeatother

\section{Introduction}

\subsection{Marcinkiewicz spaces}

Under abundant names and varying degrees of generality, numerous variants of \emph{Marcinkiewicz spaces} have been appearing in the literature for a long time, and their importance and arrays of applications are quite well known.

Let $(\RRR,\mu)$ denote a non-atomic $\sigma$-finite measure space of positive measure. Marcinkiewicz spaces $m_{\varphi}=m_{\varphi}(\RRR, \mu)$ and $M_{\varphi}=M_{\varphi}(\RRR, \mu)$ are defined as collections of $\mu$-measurable functions $f\colon\RRR\to\R$ such that $\nrm{f}_{m_{\varphi}}<\infty$ or $\nrm{f}_{M_{\varphi}}<\infty$, respectively,
where, loosely speaking, the functionals $\nrm{f}_{m_{\varphi}}$ and $\nrm{f}_{M_{\varphi}}$ control the size of $|f|$ using a fixed function $\varphi\colon(0,\mu(\RRR))\to(0,\infty)$.
Precisely,
\begin{equation*}
    \nrm{f}_{m_{\varphi}}
		= \sup_{t\in(0,\mu(\RRR))}\varphi(t)\,f^{*}(t)
    \quad\text{and}\quad
	\nrm{f}_{M_{\varphi}}
		= \sup_{t\in(0,\mu(\RRR))}\varphi(t)\,f^{**}(t),
\end{equation*}
where $f^*\colon(0,\infty)\to[0,\infty]$ is the \emph{nonincreasing rearrangement} of $f$, given by
\begin{equation*}
	f^*(t) = \inf\{\lambda>0:\mu(\{x\in\RRR: |f(x)|>\lambda\})\leq t\}
		\quad\text{for $t\in(0,\infty)$},
\end{equation*}
and $f^{**}\colon(0,\infty)\to[0,\infty]$ denotes the \emph{maximal nonincreasing rearrangement} of $f$ defined as
\begin{equation*}
	f^{**}(t) = \frac{1}{t}\int_0^tf^*(s)\,\d s
		\quad\text{for $t\in(0,\infty)$}.
\end{equation*}
The space $M_{\varphi}$ is always embedded into $m_{\varphi}$, thanks to the inequality $f^*\leq f^{**}$, and there are simple criteria on $\varphi$ under which they coincide.
Whereas the operation $f\mapsto f^{**}$ is sublinear, the operation $f\mapsto f^{*}$ is only positively homogeneous.
Consequently, $\nrm{\cdot}_{M_{\varphi}}$ is always a norm, whereas $\nrm{\cdot}_{m_{\varphi}}$ is typically merely a quasinorm.
This to some extent explains why the spaces $m_{\varphi}$ seem to appear less often than the spaces $M_{\varphi}$ or are completely omitted in the literature, \cf \eg monographs by~\citet{Kre:82} or~\citet{Ben:88}.

The spaces $M_{\varphi}$ first appeared in the work of~\citet{Lor:51}, where they were shown to be the dual spaces of certain separable function spaces, whose importance was spotted in connection with the limiting behaviour of potential operators, and later with fine properties of Sobolev functions, \cf~\eg~\citet{Ste:81}. Those function spaces were labelled as $\Lambda_{\varphi}$ spaces, and later called Lorentz spaces, or also Lorentz endpoint spaces.
A systematic and comprehensive treatment of both $\Lambda_{\varphi}$ and $M_{\varphi}$ spaces can be found in~\cite{Kre:82}, where $\varphi$ is required to be a nontrivial concave function.
Properties of these and related function spaces were studied, for example, by~\citet{Sem:63,Sem:64,Kre:72} and, under yet different assumptions on $\varphi$, by~\citet{Sha:72}.

Despite appearing less often, the spaces $m_{\varphi}$ should not be neglected either.
For example, the weak Lebesgue spaces $L^{p, \infty}$, which indisputably play an important role in harmonic analysis, are particular instances of $m_{\varphi}$, corresponding to the choice $\varphi(t) = t^{1/p}$, $p\in (0, \infty)$.
For $p\in(1,\infty)$, $M_{\varphi}$ with $\varphi(t) = t^{1/p}$ also coincides (up to equivalence of the defining functionals) with $L^{p, \infty}$, but this is not the case when $p\in(0, 1]$.
For $p = 1$, the Marcinkiewicz space $M_{\varphi}$ collapses to $L^1$ (with equal norms). When $p\in(0,1)$, $M_{\varphi}$ is $L^1$ again (with equivalent norms) provided that $\mu(R) < \infty$, otherwise it is trivial, \ie, it contains only the zero function.
Furthermore, the mere equivalence of the defining functionals is often simply not enough when dealing with problems of a geometric nature.
An interesting application of $m_{\varphi}$ spaces, which are equivalent neither to weak Lebesgue spaces nor to any $M_{\varphi}$ spaces, was recently found by~\citet[Theorem~3.1]{Cia:23}, where the choices of $\varphi(t) = t\log(e/t)$ and $\varphi(t) = t\sqrt{\log(e/t)}$ play a crucial role in investigation of sharp estimates of the Ornstein--Uhlenbeck operator on the Gaussian space.

\subsection{Noncompactness}

Our main focus is on (non)compactness of operators into Marcinkiewicz spaces and the quantitative analysis of how much noncompact particular operators having values in Marcinkiewicz space are.
The quantitative approach requires finer means than those used for mere analysis of whether an operator is compact or not.

Our approach to measuring the lack of compactness is based on the so-called ball measure of noncompactness.
Let $X$ and $Y$ be (quasi)normed linear spaces and let $T$ be a bounded positively homogeneous mapping defined on $X$ and taking values in $Y$, a fact we will denote by $T\colon X\to Y$.
The \emph{ball measure of noncompactness} $\bmc(T)$ of $T$ is defined as the infimum of radii $r>0$ for which there exists a finite collection $\{g_j\}_j$ of elements in $Y$ such that
\begin{equation*}
    T(B_X)\subseteq\bigcup_j \left(g_j + r B_Y \right),
\end{equation*}
where $B_X$ and $B_Y$ denote the closed unit balls in $X$ and $Y$, respectively.
Our aim is to provide general theorems suitable for obtaining lower bounds on the ball measure of noncompactness of operators having Marcinkiewicz spaces as their targets.
Loosely speaking, such lower bounds tell us ``how bad at least'' the noncompactness has to be.

The ball measure of noncompactness was introduced by~\citet{Kur:30}, and it has been heavily studied and applied ever since, see, \eg \citet{Dar:55,Sad:68,Ban:80,Car:80,Car:81,Hen:03,Edm:18,Bou:19,Lan:19,Lan:20} and references therein.

Clearly, one always has $0 \le \bmc(T) \le \nrm{T}$, where $\nrm{T}$ denotes the operator norm, and it can be easily observed that $T$ is compact if and only if $\bmc(T)=0$.
An operator $T$ will be called \emph{maximally noncompact} if $\alpha(T)=\|T\|$.

\subsection{Sobolev embeddings}

Embeddings between function spaces are probably the most important and arguably also most frequently appearing in the literature category of operators whose compactness and noncompactness issues are of interest.
An \emph{embedding} is the action of the identity operator acting between two function spaces.
A crucial example is \emph{Sobolev embeddings}, where, surprising as it may seem, Marcinkiewicz spaces appear naturally.

A leading example is closely connected with the \emph{limiting} cases of Sobolev embeddings, that is, those of the homogeneous Sobolev space $V_{0}^{1,n}(\Omega)$ (and its various modifications), of functions defined in a bounded domain $\Omega$ in $\mathbb R^n$, $n\geq2$ and vanishing at the boundary of $\Omega$ in a suitable sense.
The adjective ``limiting'' stems from the fact that the degree of integrability of derivatives, $n$, coincides with the dimension of the underlying domain.
Perhaps the most classical form of such a Sobolev embedding reads
\begin{equation}\label{E:trudinger}
    V_{0}^{1,n}(\Omega) \to \exp L^{n'}(\Omega),
\end{equation}
in which $n'=n/(n-1)$.
Embedding~\eqref{E:trudinger} was explicitly stated by~\citet{Tru:67}, but it can be derived from various other sources, see \eg~\citep{Yud:61,Poh:65,Hun:66,Pee:66,One:68,Cwi:00,Maz:11}.
The target space $\exp L^{n'}(\Omega)$ is usually interpreted as an Orlicz space.
Nevertheless, setting
\begin{equation}\label{E:trudinger_marc_phi}
    \varphi(t)=\left(\log \frac{2|\Omega|}{t}\right)^{-\frac{1}{n'}}
        \quad\text{for $t\in(0, |\Omega|)$},
\end{equation}
one can observe that $\exp L^{n'}(\Omega) = m_{\varphi}(\Omega) = M_{\varphi}(\Omega)$, in the sense that the function spaces coincide and, at the same time, their (quasi-)norms are equivalent, see \eg~\citep{OP:99}. 
However, one should be aware of an important catch here:
While the equivalence of the (quasi-)norms guarantees that the topological properties of all three spaces are the same, this does not mean that so are their geometrical properties, such as the measure of noncompactness of operators into them.
In particular, even though it was proved by \citet{Hen:03} that limiting embedding~\eqref{E:trudinger} is maximally noncompact when treating~$\exp L^{n'}$ as an Orlicz space, the same conclusion when the target space is replaced by the corresponding Marcinkiewicz space $m_{\varphi}$ or~$M_{\varphi}$ does not follow from this.
In fact, this was proved much later by~\citet{LMP:preprint}.

Another important connection to Sobolev embeddings is more subtle, and it is connected with optimal fundamental functions of target spaces.
The fundamental function of a rearrangement-invariant (quasi)normed function space $X$ is defined as $t\mapsto\nrm{\chi_E}_X$ where $E$ is any measurable set and $\mu(E)=t$.
Marcinkiewicz spaces are closely tied to the study of fundamental functions.
In particular, the space $M_{\varphi}$ is known to be the largest possible rearrangement-invariant Banach function space with a prescribed fundamental function, see~\citet{Ben:88}.
It has been shown that a Sobolev embedding is noncompact even after its optimal target is enlarged to the Marcinkiewicz space having the same fundamental function, \cf~\citep{Ker:08,Sla:12,Sla:15,Fer:10,Cav:19}.
Therefore, Marcinkiewicz spaces mark an important threshold.
However, it should be noted that the said threshold between compactness and noncompactness is rather blurred, as demonstrated recently by~\citet{Lan:22} who proved that, for a considerably general class of Sobolev embeddings, the Marcinkiewicz target space can actually still be essentially enlarged, and yet the resulting embedding remains noncompact.

\subsection{Disjoint superadditivity}

The disjoint superadditivity constitutes an important geometric property in connection with noncompactness.
A (quasi)normed linear space of functions $X$ is said to be \emph{disjointly superadditive} if there
exist $\gamma>0$ and $C>0$ such that for every $m\in\N$ and
every collection of functions $\{f_k\}_{k=1}^m\subseteq X$ with pairwise disjoint supports, one has
\begin{equation}\label{intro:disjoint_superadditivity}
	\sum_{k=1}^{m} \nrm{f_k}_{X}^\gamma
		\le C \nrm*{\sum_{k=1}^{m} f_k}_{X}^\gamma.
\end{equation}
This property was intensively exploited by~\citet{Hen:03} to prove that the nonlimiting Sobolev embedding $V_{0}^{1,p}(\Omega) \to L^{p^*}(\Omega)$, where $p\in[1, n)$ and $p^* = np/(n - p)$, is maximally noncompact.
Later, it was also heavily used by~\citet{Bou:19} to prove that nonlimiting Sobolev embeddings into two-parameter Lorentz spaces $L^{p^*,q}(\Omega)$, where $q\in[p, \infty)$, are also maximally noncompact.
The advances made in those papers indicate that this approach could be most likely extended from Sobolev embeddings to general operators, should the target space be disjointly superadditive.
Notably, the case $q = \infty$, where the target space is a Marcinkiewicz space, was completely avoided there.
The missing gap was later filled by~\cite{Lan:20}, where it was also shown that weak Lebesgue spaces are not disjointly superadditive.
This raises the question whether something can be said about disjoint superadditivity of Marcinkiewicz spaces in general.
It should be mentioned that, in the theory of Banach spaces/lattices and their geometry, the disjoint superadditivity is often called a lower $\gamma$-estimate, see \eg~\citep{LT:79}.
Nevertheless, we prefer to stick with the terminology used in~\citep{Hen:03} because we think that it captures the essence of how this property is often used for proving maximal noncompactness.

\subsection{Overview of the main results}

Although disjoint superadditivity proved to be a useful tool when showing maximal noncompactness, it is usually not at our disposal when dealing with Marcinkiewicz spaces.
In Section~\ref{S:disjoint-superadditivity}, we elaborate on this.
Theorem~\ref{T:M-not-superadditive} shows that \textbf{Marcinkiewicz spaces $M_{\varphi}$ are almost never disjointly superadditive}, with the exception of when they collapse to $L^1$.
Next, Theorem~\ref{T:m-not-superadditive} gives \textbf{a sufficient condition for the lack of disjoint superadditivity of $m_{\varphi}$}.
Note that the given condition is rarely violated in typical situations.

Section~\ref{S:maximal-noncompactness} contains our central results.
The abstract results of Theorems~\ref{T:general-lower} and \ref{T:maximal_noncomp_alt_m_phi} provide us with \textbf{a general lower bound for the measure of noncompactness of a bounded positively homogeneous operator}.
Here, we focus on identifying minimal assumptions that are needed in later applications.
Next, we apply the result to obtain \textbf{a lower bound for the measure of noncompactness of an embedding operator} and, consequently, \textbf{a sufficient condition for its maximal noncompactness}, see Theorem~\ref{T:max-noncompact} and Corollary~\ref{C:max-noncompact_shrinking}.
We emphasize that Theorem~\ref{T:general-lower} is not a generalization of the method by~\citet{Lan:20}. The method there exploits the absolute continuity of target (quasi)norms, which is not at our disposal when dealing with Marcinkiewicz spaces (save for the case $L^1$). In particular, that method is not suitable for proving the maximal noncompactness of the limiting embedding $V_{0}^{1,n}(\Omega) \to m_\varphi(\Omega)$ with $\varphi$ as in~\eqref{E:trudinger_marc_phi}.

In Section~\ref{S:embeddings-into-l-infty}, we address the problem of maximal noncompactness of embeddings into $L^{\infty}$.
Although~$L^{\infty}$ is technically also a Marcinkiewicz space of both types, the situation significantly differs from the other cases.
As shown by~\citet{Bou:19} and \citet{Lan:20}, embeddings into $L^{\infty}$ are often \emph{not} maximally noncompact.
An upper bound on the measure of noncompactness can be typically obtained by exploiting the span, see~\cite[Proposition~5.1]{Lan:20}.
To show that the upper bound matches the measure of noncompactness, one usually proves a lower bound, which requires more subtle techniques. 
In Theorem~\ref{T:beta-lower-of-X-to-ell-infty}, we provide a new proof of a very \textbf{general lower bound for the measure of noncompactness of an embedding from a general quasinormed space into $L^{\infty}$}.
In particular, we avoid the combinatorial argument given by~\citet{Lan:20}.

\section{Lack of disjoint superadditivity of Marcinkiewicz spaces}
\label{S:disjoint-superadditivity}

In this section we prove that the Marcinkiewicz spaces are usually not disjointly superadditive. Loosely speaking, $m_\varphi(\RRR,\mu)$ is not disjointly superadditive whenever it is a nontrivial quasi-Banach space, whereas $M_\varphi(\RRR,\mu)$ is not disjointly superadditive whenever it is a nontrivial Banach space not equivalent to $L^1(\RRR,\mu)$.

We start by recalling some terminology and basic properties of Marcinkiewicz spaces.

First, for all measurable functions $f,g\colon\RRR\to\R$, one has
\begin{equation} \label{E:f**-subadditivity}
	(f+g)^{**}(t) \le f^{**}(t) + g^{**}(t)
		\quad\text{for $t\in(0,\infty)$}
\end{equation}
and
\begin{equation} \label{E:f*-subadditivity}
	(f+g)^{*}(s+t) \le f^{*}(s) + g^{*}(t)
		\quad\text{for $s,t\in(0,\infty)$}.
\end{equation}

We recall that a functional $\nrm{\cdot}_X\colon X \to [0, \infty)$, where $X$ is a vector space, is called a \emph{quasinorm} if it satisfies the following conditions:
\begin{itemize}
    \item it is positively homogeneous, that is, for every $\lambda\in\R$ and $f\in X$ one has $\nrm{\lambda f}_X=|\lambda|\nrm{f}_X$,
    \item it satisfies $\nrm{f}_X=0$ if and only if $f = 0$ in $X$,
    \item there is a constant $C \ge 1$ such that for every $f,g\in X$ one has $\nrm{f + g}_X\le C(\nrm{f}_X+\nrm{g}_X)$.
\end{itemize}
We denote the least such $C$ by $C_X$. Clearly (unless $X$ is trivial), $C_X\in[1, \infty)$, and $\|\cdot\|_{X}$ is a norm if and only if $C_X = 1$. For more details concerning quasinormed Banach function spaces see~\cite{Nek:24}.

We say that $\varphi\colon(0,\mu(\RRR))\to(0,\infty)$ is \emph{quasiconcave} if it is nondecreasing and the function $\varphi(t)/t$ is nonincreasing on $(0,\mu(\RRR))$. Quasiconcave functions are always continuous on $(0,\mu(\RRR))$. Furthermore, they are often in fact assumed to be defined on the interval $[0,\mu(\RRR))$ and are required to satisfy $\varphi(t) = 0$ if and only if $t = 0$. For example, the function $\varphi(t) = t^\alpha$, $t\in(0, \mu(\RRR))$, is quasiconcave if and only if $\alpha\in[0,1]$.

We say that a function $\varphi\colon(0,\mu(\RRR))\to(0,\infty)$ is \emph{admissible} if the function $\tff$ defined as
\begin{equation*}
   \tff(t) =  \sup_{s\in(0,t]} s \sup_{\tau\in[s, \mu(\RRR))} \frac{\varphi(\tau)}{\tau} = t \sup_{s\in[t, \mu(\RRR))} \frac{\sup_{\tau\in(0, s]} \varphi(\tau)}{s},\ t\in(0, \mu(\RRR)),
\end{equation*}
is finite. Given an admissible function $\varphi$, the function $\tff$ is called the \emph{least quasiconcave majorant} of $\varphi$. The quasiconcave majorant is quasiconcave and satisfies $\varphi\leq\tff$. Moreover, we have (see~\cite[Lemma~1.5]{GP:06})
\begin{equation}\label{E:quasiconcave_majorant_equal_norms_M}
    \|f\|_{M_{\varphi}} = \|f\|_{M_{\tff}} \quad \text{for every measurable function $f$}.
\end{equation}
Obviously, when $\varphi$ is quasiconcave, then it is admissible and $\varphi = \tff$. Assuming that $\mu(\RRR)<\infty$, a textbook example of an admissible function is the function $\varphi(t) = t^\alpha \log(2\mu(\RRR)/t)^\beta$, $t\in(0, \mu(\RRR))$, where either $\alpha > 0$ and $\beta\in\R$ or $\alpha = 0$ and $\beta\leq0$. The Marcinkiewicz space $M_{\varphi}$ is a nontrivial Banach space provided that $\varphi$ is an admissible function. In particular, the fact that $\nrm{\,\cdot\,}_{M_\varphi}$ satisfies the triangle inequality follows from \eqref{E:f**-subadditivity}.

We say that a function $\varphi\colon(0,\mu(\RRR))\to(0,\infty)$ satisfies the \textit{$\Delta_2$
condition}, and write $\varphi\in\Delta_2$, if there is $c>0$ such that
\begin{equation*}
	\varphi(2t) \le c\varphi(t)
		\quad\text{for every $t\in(0,\mu(\RRR)/2)$.}
\end{equation*}
We say that $\varphi$ satisfies the $\Delta_2$ condition \emph{near zero} if $\varphi(2t) \le c\varphi(t)$ for every $t\in(0,t_0)$ for some $t_0\in(0,\mu(\RRR)/2)$. The space $m_\varphi$ is a nontrivial quasi-Banach space provided that $\varphi\in\Delta_2$. In such case, the quasinorm constant of $\nrm{\cdot}_{m_{\varphi}}$ is not larger than the $\Delta_2$-constant of $\varphi$. Indeed, by \eqref{E:f*-subadditivity}, we have
\begin{equation*}
	\nrm{f+g}_{m_\varphi}
		= \sup_{t\in(0,\mu(\RRR))} \varphi(t)(f+g)^*(t)
		\le \sup_{t\in(0,\mu(\RRR)/2)} \varphi(2t)\bigl(f^*(t) + g^*(t)\bigr)
		\le c\bigl( \nrm{f}_{m_\varphi} + \nrm{g}_{m_\varphi} \bigr),
\end{equation*}
for measurable $f,g\colon\RRR\to\R$, where $c$ is the $\Delta_2$-constant of $\varphi$. If $\varphi$ is quasiconcave, then $\varphi\in\Delta_2$ with $c=2$. Indeed, since the function $t\mapsto\varphi(t)/t$ is nonincreasing, we have
\begin{equation} \label{E:delta2}
	\frac{\varphi(2t)}{\varphi(t)}
		= 2 \frac{\varphi(2t)}{2t}\frac{t}{\varphi(t)}
		\le 2 \quad\text{for every $t\in(0,\mu(\RRR)/2)$}.
\end{equation}

We say that $\varphi$ is \emph{almost quasiconcave} if it is admissible and there is a constant $C_\varphi\in(0, 1]$ such that
\begin{equation}\label{E:quasiconcave_majorant_equivalence}
    C_\varphi \tff \leq \varphi \leq \tff \quad \text{in $(0, \mu(\RRR))$}.
\end{equation}
Clearly, if $\varphi$ is quasiconcave, then it is also almost quasiconcave with $C_\varphi = 1$. Unlike in \eqref{E:quasiconcave_majorant_equal_norms_M}, the functionals $\| \cdot \|_{m_{\varphi}}$ and $\| \cdot \|_{m_{\tff}}$ do not coincide in general, but they are equivalent. More precisely, we have
\begin{equation}\label{E:quasiconcave_majorant_equivalent_norms_m}
   C_\varphi \|f\|_{m_{\tff}} \leq \|f\|_{m_{\varphi}} \leq \|f\|_{m_{\tff}} \quad \text{for every measurable function $f$}.
\end{equation}
Moreover, if $\varphi$ is almost quasiconcave, then $\varphi\in\Delta_2$ with $c = 2/C_\varphi$. In particular, $m_\varphi$ is a nontrivial quasi-Banach space when $\varphi$ is almost quasiconcave. In the following two examples, we assume $\mu(\RRR) < \infty$. A typical example of an almost quasiconcave function is the function $\varphi(t) = t^\alpha \log(2\mu(\RRR)/t)^\beta$, $t\in(0, \mu(\RRR))$, where $\alpha\in(0,1)$ and $\beta\in\R$, $\alpha = 0$ and $\beta\leq0$, or $\alpha = 1$ and $\beta\geq0$. A textbook example of a function that is admissible but not almost quasiconcave is the function $\varphi(t) = t^\alpha$, $t\in(0, \mu(\RRR))$, where $\alpha > 1$.

Finally, we are in a position to state and prove the statements of this section. We shall treat each of the two types of Marcinkiewicz spaces separately. The results are known in the case when $\varphi$ is a power function, see~\cite[Theorems~2.1~and~2.2]{Lan:20}.
Note that \citet{KK:05} studied when $m_\varphi(\RRR, \mu)$ satisfies the so-called upper $r$-estimate, which is the reverse inequality of \eqref{intro:disjoint_superadditivity} with $\gamma=r$. Moreover, their assumptions on $\varphi$ are slightly more restrictive and, more importantly, the space $M_\varphi(\RRR, \mu)$ is not considered.

\begin{theorem} \label{T:m-not-superadditive}
Let $\varphi\colon(0,\mu(\RRR))\to(0,\infty)$ be a nondecreasing function
satisfying $\Delta_2$ condition near zero. Then the space $m_\varphi(\RRR, \mu)$ is not disjointly superadditive.
\end{theorem}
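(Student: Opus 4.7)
The plan is to show that for arbitrary $\gamma, C > 0$ the disjoint superadditivity inequality can be violated by a suitable finite family of scaled characteristic functions of pairwise disjoint sets whose measures form a geometric progression.

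Let $c \ge 1$ and $t_0 > 0$ satisfy $\varphi(2t) \le c\,\varphi(t)$ for all $t \in (0, t_0)$. Given an integer $m$ (to be fixed at the end) and $a > 0$ with $2^{m+1} a < \min\{t_0, \mu(\RRR)\}$ (restricting to a subset of finite positive measure if $\mu(\RRR) = \infty$), the non-atomicity of $\mu$ allows pairwise disjoint measurable $E_j \subseteq \RRR$ with $\mu(E_j) = a\cdot 2^j$, $j = 1, \dots, m$, since the total measure is $a(2^{m+1}-2) < \mu(\RRR)$. Set $\psi_j := \sup_{t \in (0,\, a\cdot 2^j)} \varphi(t)$ and $f_j := \psi_j^{-1} \chi_{E_j}$. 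Then $f_j^{*}(t) = \psi_j^{-1}\chi_{(0,\, a\cdot 2^j)}(t)$, hence $\nrm{f_j}_{m_\varphi} = 1$ and $\sum_{j=1}^m \nrm{f_j}_{m_\varphi}^\gamma = m$.

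The key step is bounding $\nrm*{\sum_{j=1}^m f_j}_{m_\varphi}$ by a constant depending only on $c$. As $j \mapsto \psi_j$ is nondecreasing, the rearrangement of the sum equals $\psi_j^{-1}$ on an interval with right endpoint $S_j := \sum_{i \le j} a\cdot 2^i = a(2^{j+1}-2) \le 2\mu(E_j)$. Two applications of the $\Delta_2$ condition give
\begin{equation*}
\varphi(S_j) \le \varphi(2a\cdot 2^j) \le c\,\varphi(a\cdot 2^j) \le c^2\,\varphi(a\cdot 2^{j-1}) \le c^2\,\psi_j,
\end{equation*}
where the last inequality uses $a\cdot 2^{j-1} \in (0,\, a\cdot 2^j)$. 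Consequently $\nrm*{\sum_{j=1}^m f_j}_{m_\varphi} \le \max_{j \le m} \psi_j^{-1}\varphi(S_j) \le c^2$. Choosing $m > C c^{2\gamma}$ then yields $\sum_{j=1}^m \nrm{f_j}_{m_\varphi}^\gamma = m > C c^{2\gamma} \ge C\,\nrm*{\sum_{j=1}^m f_j}_{m_\varphi}^\gamma$, violating superadditivity with the preassigned $(\gamma, C)$.

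The only real subtlety is that $\varphi$ is merely nondecreasing (possibly with jumps), so one cannot generally identify $\psi_j$ with $\varphi(a\cdot 2^j)$. This is precisely why I insert the extra $\Delta_2$-step $\varphi(a\cdot 2^j) \le c\,\varphi(a\cdot 2^{j-1}) \le c\,\psi_j$ at the cost of one extra factor of $c$; this is the one place in the argument requiring careful bookkeeping, everything else being elementary arithmetic of the geometric progression and a direct computation of $(\sum_j f_j)^{*}$.
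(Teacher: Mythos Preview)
Your proof is correct and follows essentially the same route as the paper's: scaled characteristic functions of pairwise disjoint sets with geometrically scaled measures, with the norm of the sum bounded via the $\Delta_2$ condition. Your use of $\psi_j=\sup_{t<a\cdot 2^j}\varphi(t)$ (and the resulting extra factor of $c$) is in fact slightly more careful than the paper, which tacitly treats $\varphi$ as left-continuous when writing $\nrm{f_k}_{m_\varphi}=1$.
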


\begin{proof}
By assumption, there exist $t_0>0$ and $c>0$
such that $\varphi(2t)\le c\varphi(t)$ for $t\in(0,t_0)$.
Fix $m\in\N$. There are pairwise disjoint
measurable subsets $E_k$, $k=1,\dots,m$, of $\RRR$ such that their measures
$r_k=\mu(E_k)$ satisfy
\begin{equation} \label{E:rk}
	r_{k+1}\le \frac{r_k}{2}
		\quad\text{for each $k=1,\dots,m-1$}.
\end{equation}
Moreover, we may assume that $r_1\in(0,t_0)$.
We define the functions
\begin{equation} \label{E:fk-def}
	f_k=\frac{\chi_{E_k}}{\varphi(r_k)}
		\quad\text{for $k=1,\dots,m$}.
\end{equation}
Then
\begin{equation} \label{E:fk-rearrangement}
	f_k^*=\frac{\chi_{(0,r_k)}}{\varphi(r_k)},
\end{equation}
and thus
\begin{equation}\label{E:fk-small_m_norm}
	\nrm{f_k}_{m_\varphi}
		= \sup_{t\in(0,\mu(\RRR))} \varphi(t)\, f_k^*(t)
		= 1\quad\text{for $k=1,\dots,m$}.
\end{equation}
We set
\begin{equation} \label{E:f-def}
	f = \sum_{k=1}^m f_k.
\end{equation}
Since the functions $f_k$, $k = 1, \dots, m$, have pairwise disjoint supports, we have
\begin{equation} \label{E:fks-rearrangement}
	f^*
		= \sum_{k=1}^{m} \frac{\chi_{(a_k,a_{k-1})}}{\varphi(r_k)},
\end{equation}
where
\begin{equation*}
	a_k=
	\begin{cases}
		r_{k+1}+\dots+r_m &\text{if $k=0,\dots,m-1$},
			\\
		0 &\text{if $k=m$}.
	\end{cases}
\end{equation*}
Consequently,
\begin{align}
	\nrm{f}_{m_\varphi}
		& = \sup_{t\in(0,\mu(\RRR))} \varphi(t)\, f^*(t)
			= \max_{j\in\{1,\dots,m\}} \sup_{t\in(a_j,a_{j-1})}
            \varphi(t)
			\sum_{k=1}^m \frac{1}{\varphi(r_k)} \chi_{(a_k,a_{k-1})}(t)
			\nonumber \\
		& = \max_{j\in\{1,\dots,m\}} \frac{\varphi(a_{j-1})}{\varphi(r_j)}
			\le \max_{j\in\{1,\dots,m\}} \frac{\varphi(2r_j)}{\varphi(r_j)}
			\le c, \label{E:f-small_m_norm}
\end{align}
where we used \eqref{E:rk} to show that
\begin{equation} \label{E:aj2rj}
	a_{j-1} = r_j+\cdots+r_m \le 2r_j
		\quad\text{for $j=1,\dots,m$}
\end{equation}
and the $\Delta_2$ condition.

Now, suppose that $m_{\varphi}(\RRR, \mu)$ is disjointly superadditive. Then there are $\gamma>0$ and a positive constant
$C$ such that
\begin{equation}\label{E:ds}
	\sum_{k=1}^{m} \nrm{f_k}_{m_{\varphi}}^\gamma
		\le C \nrm{f}_{m_{\varphi}}^\gamma.
\end{equation}
However, thanks to \eqref{E:fk-small_m_norm} and \eqref{E:f-small_m_norm}, this implies $m \le Cc^\gamma$, which is clearly impossible because $m\in\N$ was selected
arbitrarily at the beginning.
\end{proof}

\begin{remark}
The conclusion of Theorem~\ref{T:m-not-superadditive} is still true even when $\varphi$ is not necessarily nondecreasing but merely equivalent to a nondecreasing function. By that, we mean that there are constants $C_1,C_2>0$ and a nondecreasing function $\psi\colon (0, \mu(\RRR)) \to (0, \infty)$ such that $C_1\psi \leq \varphi \leq C_2\psi$ on $(0, \mu(\RRR))$.
\end{remark}

We now turn our attention to the other type of Marcinkiewicz spaces.

\begin{theorem} \label{T:M-not-superadditive}
Let $\varphi\colon (0, \mu(\RRR)) \to (0, \infty)$ be a quasiconcave function.  Then
the space $M_\varphi(R, \mu)$ is not disjointly superadditive if and only if
\begin{equation}\label{E:varphi-lim}
	\lim_{t\to0_+}\frac{t}{\varphi(t)} = 0.
\end{equation}
\end{theorem}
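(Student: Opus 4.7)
The plan is to prove the two implications separately.

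For the \emph{sufficient} direction (\eqref{E:varphi-lim} implies $M_\varphi$ is not disjointly superadditive), I would mimic the construction in the proof of Theorem~\ref{T:m-not-superadditive}, but with parameters chosen to accommodate the additional averaging present in $f^{**}$. Since $t\mapsto t/\varphi(t)$ is nondecreasing (by quasiconcavity of $\varphi$) and vanishes at $0$ by assumption, I can inductively select $r_1>r_2>\dots>r_m$ in $(0,\mu(\RRR))$ satisfying both $r_{k+1}\leq r_k/2$ and $r_{k+1}/\varphi(r_{k+1})\leq r_k/(2\varphi(r_k))$; the possibility of this simultaneous double geometric decay is precisely what \eqref{E:varphi-lim} provides. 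With disjoint sets $E_k\subseteq\RRR$ of measure $r_k$ and $f_k=\chi_{E_k}/\varphi(r_k)$, the identity $f_k^{**}(t)=\min(1,r_k/t)/\varphi(r_k)$ together with quasiconcavity immediately yields $\|f_k\|_{M_\varphi}=1$. The crux is then a uniform bound $\|\sum_k f_k\|_{M_\varphi}\leq K$ with $K$ independent of $m$, after which taking $m$ arbitrarily large contradicts every admissible pair $(\gamma,C)$ in the disjoint-superadditivity inequality.

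To establish the uniform bound I would partition $(0,\mu(\RRR))$ into $[a_0,\mu(\RRR))$ and the intervals $(a_j,a_{j-1})$ with $a_j=r_{j+1}+\dots+r_m$, and on each $(a_j,a_{j-1})$ split $\int_0^t f^*$ into the already-accumulated mass $A_j:=\int_0^{a_j}f^*$ and the new incremental contribution $(t-a_j)/\varphi(r_j)$. The geometric decay of $r_k/\varphi(r_k)$ telescopes $A_j$ into at most $2r_{j+1}/\varphi(r_{j+1})$, while $a_j\leq 2r_{j+1}$ combined with monotonicity of $\varphi(t)/t$ gives $\varphi(t)/t\leq\varphi(r_{j+1})/r_{j+1}$ on $(a_j,a_{j-1})$; multiplying these two bounds collapses the $A_j$-contribution to at most $2$. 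The incremental contribution is controlled via the $\Delta_2$-estimate $\varphi(t)\leq\varphi(a_{j-1})\leq 2\varphi(r_j)$ (see~\eqref{E:delta2}) together with $(t-a_j)/t\leq 1$. The piece $t\geq a_0$ is handled analogously, and altogether $\varphi(t)f^{**}(t)$ stays uniformly bounded.

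For the \emph{necessary} direction, assume $\ell:=\lim_{t\to 0^+}t/\varphi(t)>0$. Monotonicity of $t/\varphi(t)$ gives $\varphi(t)\leq t/\ell$ throughout $(0,\mu(\RRR))$, hence $\|f\|_{M_\varphi}\leq\ell^{-1}\|f\|_{L^1}$. Conversely, when $\mu(\RRR)<\infty$, monotonicity of $\varphi(t)/t$ and strict positivity of $\varphi$ force a positive lower bound $c_0:=\varphi(\mu(\RRR)^-)/\mu(\RRR)>0$ for $\varphi(t)/t$, and the pointwise estimate $\varphi(t)f^{**}(t)\geq c_0\, t f^{**}(t)$ passed to the supremum yields $\|f\|_{M_\varphi}\geq c_0\|f\|_{L^1}$. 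Thus $M_\varphi$ coincides with $L^1$ up to equivalence of norms, and the identity $\|\sum_k f_k\|_{L^1}=\sum_k\|f_k\|_{L^1}$ for pairwise disjoint supports delivers disjoint superadditivity with exponent $\gamma=1$. The main obstacle in the whole argument is the uniform upper bound in the sufficient direction: the factors $\varphi(t)/t$ and $\int_0^t f^*$ have opposite monotonicities on each interval $(a_j,a_{j-1})$, so a naive supremum-of-each-factor estimate loses control, and the decomposition into accumulated and incremental pieces described above is what rescues the argument and reveals the double geometric decay enabled by \eqref{E:varphi-lim} as its essential driver.
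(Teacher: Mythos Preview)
Your proposal is correct and follows essentially the same route as the paper: the same double geometric decay of $r_k$ and $r_k/\varphi(r_k)$, the same test functions $f_k=\chi_{E_k}/\varphi(r_k)$, the same partition of $(0,a_0]$ into the intervals $(a_j,a_{j-1}]$ with the split of $\int_0^t f^*$ into the accumulated mass $\sum_{k>j} r_k/\varphi(r_k)$ plus the incremental piece $(t-a_j)/\varphi(r_j)$, and the same reduction of the converse direction to the $L^1$-equivalence. The only cosmetic difference is that you explicitly restrict to $\mu(\RRR)<\infty$ in the necessary direction, whereas the paper subsumes this under ``it is easy to see''; your minor slip in writing ``$a_j\le 2r_{j+1}$'' where $a_j\ge r_{j+1}$ is what is actually used does not affect the argument.
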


\begin{proof}
Assume that \eqref{E:varphi-lim} is true. Fix $m\in\N$ and let the sets $\{E_k\}_{k=1}^m$ be chosen as in
the proof of Theorem~\ref{T:m-not-superadditive}. Moreover, we may assume in addition that 
\begin{equation}
\label{E:rk-phi}
	\frac{r_{k+1}}{\varphi(r_{k+1})}
		\le \frac{r_k}{2\varphi(r_k)}
		\quad\text{for each $k=1,\dots,m-1$},
\end{equation}
thanks to \eqref{E:varphi-lim}.
Furthermore, let the functions $f_k$, $k=1,\dots,m$, be defined as in \eqref{E:fk-def}, and we denote their sum by $f$ as in \eqref{E:f-def}.
Note that
\begin{equation*}
	f_k^{**}(t)
		= \frac{1}{\varphi(r_k)} \chi_{(0,r_k)}(t)
			+ \frac{r_k}{t\varphi(r_k)} \chi_{[r_k,\infty)}(t)
                \quad\text{for every $t\in(0,\infty)$}
\end{equation*}
by \eqref{E:fk-rearrangement}, and so, using the quasiconcavity of $\varphi$,
\begin{align*}
	\nrm{f_k}_{M_{\varphi}}
		& = \sup_{t\in(0,\mu(\RRR))} \varphi(t)\,f_k^{**}(t)
		= \max\left\{\frac{1}{\varphi(r_k)}\sup_{t\in(0,r_k)}\varphi(t),
				\frac{r_k}{\varphi(r_k)}\sup_{t\in[r_k,\mu(\RRR))}\frac{\varphi(t)}{t}\right\}
		= 1
\end{align*}
for every $k=1,\dots,m$.
Now, fix $j\in\{1,\dots,m-1\}$ and $t\in(a_j,a_{j-1}]$. Then,
using \eqref{E:fks-rearrangement},
\begin{equation*}
	\int_0^t f^*(s)\,\d s
		= \int_{0}^{a_j} f^*(s)\,\d s
			+ \int_{a_j}^{t} f^*(s)\,\d s
		= \sum_{k=j+1}^m\frac{r_k}{\varphi(r_k)} + \frac{t-a_j}{\varphi(r_j)}.
\end{equation*}
Owing to the quasiconcavity of $\varphi$, we conclude that
\begin{align*}
	\begin{split}
	\sup_{t\in(a_j,a_{j-1}]} \varphi(t)\, f^{**}(t)
		& \le \sup_{t\in(a_j,a_{j-1}]}
			\left(\frac{\varphi(t)}{t} \sum_{k=j+1}^{m}\frac{r_k}{\varphi(r_k)}
				+\frac{\varphi(t)}{\varphi(r_j)}\right)
			\\
		& \le \frac{\varphi(a_j)}{a_j} \sum_{k=j+1}^{m}\frac{r_k}{\varphi(r_k)}
			+\frac{\varphi(a_{j-1})}{\varphi(r_j)}.
	\end{split}
\end{align*}
Note that $a_{j-1}\le 2r_j$ by \eqref{E:aj2rj}, and
\begin{equation*}
	\frac{\varphi(a_{j-1})}{\varphi(r_j)}
		\le \frac{\varphi(2r_j)}{\varphi(r_j)}
		\le 2
\end{equation*}
thanks to \eqref{E:delta2}. Hence
\begin{equation}\label{E:sup-ajs}
\sup_{t\in(a_j,a_{j-1}]} \varphi(t)\, f^{**}(t) \leq 2 + \frac{\varphi(a_j)}{a_j} \sum_{k=j+1}^{m}\frac{r_k}{\varphi(r_k)}.
\end{equation}
As for the sum on the right-hand side, we obtain
\begin{equation*}
	\sum_{k=j+1}^{m} \frac{r_k}{\varphi(r_k)}
		\le \frac{r_{j+1}}{\varphi(r_{j+1})} \sum_{k=j+1}^{m} 2^{j+1-k}
		\le 2 \frac{r_{j+1}}{\varphi(r_{j+1})}
\end{equation*}
by \eqref{E:rk-phi}. Furthermore, since $a_j\ge r_{j+1}$ by the definition of $a_j$, we have
\begin{equation*}
    \frac{\varphi(a_j)}{a_j}\le \frac{\varphi(r_{j+1})}{r_{j+1}}.
\end{equation*}
Hence
\begin{equation*}
\sum_{k=j+1}^{m} \frac{r_k}{\varphi(r_k)} \leq 2 \frac{a_j}{\varphi(a_j)}.
\end{equation*}
Combining this with \eqref{E:sup-ajs}, we arrive at
\begin{equation} \label{E:sup1}
	\sup_{t\in(a_j,a_{j-1}]} \varphi(t)\, f^{**}(t)
		\le 4
		\quad\text{for each $j=1,\dots, m-1$}.
\end{equation}

It remains to consider the case when $t\in(0,a_{m-1}]$. But, for such $t$,
\begin{equation*}
	\int_0^t f^*(s)\,\d s
		= \frac{t}{\varphi(r_m)},
\end{equation*}
and so
\begin{equation} \label{E:sup2}
	\sup_{t\in(0,a_{m-1}]} \varphi(t)f^{**}(t)
		= \sup_{t\in(0,a_{m-1}]} \frac{\varphi(t)}{\varphi(r_m)}
		= \frac{\varphi(a_{m-1})}{\varphi(r_m)}
		= 1.
\end{equation}
Now, combining the estimates \eqref{E:sup1} and \eqref{E:sup2}, we obtain
$\nrm{f}_{M_\varphi}\le 4$. Therefore, if $M_{\varphi}(\RRR, \mu)$ were disjointly superadditive, then, as in~\eqref{E:ds}, we would obtain $m\le C4^\gamma$ for some $\gamma>0$ and $C>0$. However, this is impossible because $m\in\N$ was arbitrary.

Finally, assume that~\eqref{E:varphi-lim} is not satisfied. Since the function $t\mapsto \frac{t}{\varphi(t)}$ is nondecreasing on $(0,\mu(\RRR))$, the limit in \eqref{E:varphi-lim} exists, and it is finite and positive. In other words, we have
\begin{equation*}
	\lim_{t\to 0_+}\frac{t}{\varphi(t)} \in(0, \infty).
\end{equation*}
It is easy to see that this implies that $M_{\varphi}(\RRR, \mu)$ is equivalent to $L^{1}(\RRR, \mu)$ in the sense that the spaces are equal as sets and their norms are equivalent. Since $L^{1}(\RRR, \mu)$ is obviously disjointly superadditive, so is $M_{\varphi}(\RRR, \mu)$.
\end{proof}

\begin{remark}
The conclusion of Theorem~\ref{T:M-not-superadditive} is still true if $\varphi$ is merely an almost quasiconcave function.
\end{remark}

\section{Maximal noncompactness of embeddings into a Marcinkiewicz space}\label{S:maximal-noncompactness}

In this section we present the main results of the paper. Loosely speaking, they state that embeddings into Marcinkiewicz spaces with a certain shrinking property are maximally noncompact even though they are hardly ever disjointly superadditive (as we already know). We develop a new approach to the problem, whose core is the following abstract result. Its application will follow immediately. It will be useful to note that, for a $\mu$-measurable function $f\colon\RRR\to\R$, one has
\begin{equation}\label{E:f*-alt}
    f^*(t_-) = \sup_{\mu(E)=t}\essinf_{x\in E}|f(x)| \quad \text{for $t\in(0,\mu(\RRR))$}
\end{equation}
and
\begin{equation}\label{E:f**-alt}
    f^{**}(t) = \frac{1}{t}\sup_{\mu(E)=t}\int_E|f(x)|\d\mu(x)  \quad \text{for $t\in(0,\mu(\RRR))$}.
\end{equation}

We say that an operator $T\colon X \to Y$ is \emph{positively homogeneous} if $\|T(\gamma x)\|_Y = \gamma \|Tx\|_Y$ for every $x\in X$ and every $\gamma > 0$.

We first state an elementary but useful observation that will prove to be quite useful in proofs of the main results.

\begin{lemma}\label{L:distance}
    Let $X,Y$ be quasinormed spaces,  $T\colon X\to Y$ a bounded positively homogeneous operator, and $r>0$. If $g\in Y$ is such that
    \begin{equation*}
        T(B_X)\cap\left(g+rB_Y\right)\neq\emptyset,
    \end{equation*}
    then
    \begin{equation}\label{E:distance}
        \|g\|_{Y}\le C_Y\left(\|T\|+r\right),
    \end{equation}
    in which $C_Y$ is the constant in the definition of a quasinorm.
\end{lemma}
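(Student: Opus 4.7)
The plan is to observe that the statement is essentially a direct application of the quasi-triangle inequality in $Y$, since the hypothesis places $g$ at controlled distance from a point of $T(B_X)$, which is itself norm-bounded by $\|T\|$.

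Concretely, I would pick some element $f$ in the nonempty intersection $T(B_X)\cap(g+rB_Y)$. By definition of the two sets, this gives $x\in B_X$ and $h\in B_Y$ with
\begin{equation*}
    f = Tx = g + rh,
\end{equation*}
so that $g = Tx - rh$. Each summand has an easy bound: $\|Tx\|_Y\le\|T\|$ because $x\in B_X$ and $\|T\|$ is the operator norm of the (bounded, positively homogeneous) $T$; and $\|{-rh}\|_Y = r\|h\|_Y\le r$ by the absolute homogeneity of the quasinorm and $h\in B_Y$.

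Applying the defining quasi-triangle inequality of $\|\cdot\|_Y$ with constant $C_Y$ then yields
\begin{equation*}
    \|g\|_Y \le C_Y\bigl(\|Tx\|_Y + \|{-rh}\|_Y\bigr) \le C_Y\bigl(\|T\|+r\bigr),
\end{equation*}
which is exactly \eqref{E:distance}. There is no real obstacle: the only subtlety worth flagging is that $Y$ is merely quasinormed, so the triangle inequality introduces the factor $C_Y$ on the right-hand side. Positive homogeneity of $T$ plays no further role beyond legitimizing $\|T\| = \sup_{x\in B_X}\|Tx\|_Y$ as the relevant notion of operator norm.
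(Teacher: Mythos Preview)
Your proof is correct and uses the same key ingredient as the paper's own argument---the quasi-triangle inequality applied to the decomposition $g = Tx + (g - Tx)$. The paper phrases this as a contrapositive (assuming $\|g\|_Y > C_Y(\|T\|+r)$ and deducing that $\|g - Tf\|_Y > r$ for every $f\in B_X$), whereas you argue directly, but the content is identical.
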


\begin{proof}
  Assume that~\eqref{E:distance} is not satisfied, that is, 
  \begin{equation*}
      \|g\|_{Y}> C_Y\left(\|T\|+r\right).
  \end{equation*}
  Then, for every $f\in B_X$, one has
  \begin{equation*}
      \|Tf\|_Y \le \|T\|\,\|f\|_X \le\|T\|,
  \end{equation*}
  whence
  \begin{equation*}
      \|g-Tf\|_Y \ge C_Y^{-1}\|g\|_Y-\|Tf\|_Y
      > C_Y^{-1}\left(C_Y\left(\|T\|+r\right)\right)-\|T\|=r.
  \end{equation*}
  Consequently, 
    \begin{equation*}
        T(B_X)\cap\left(g+rB_Y\right)=\emptyset.\qedhere
    \end{equation*}
\end{proof}

We are now in position to state and prove our first principal result.

\begin{theorem}\label{T:general-lower}
Let $X$ be a quasinormed space of measurable functions on $(\RRR, \mu)$. Let $\varphi\colon(0, \mu(\RRR)) \to (0, \infty)$. Assume one of the following:
\begin{enumerate}[label={\rm(\alph*)},ref={\rm(\alph*)}]
    \item\label{item:small_m}  $Y = m_\varphi(\RRR, \mu)$, $\varphi$ is almost quasiconcave, and there exists $t_0\in(0, \mu(\RRR))$ such that $\frac{\varphi(t)}{t}$ is nonincreasing on the interval $(0, t_0)$;
    \item\label{item:capital_M} $Y = M_\varphi(\RRR, \mu)$ and $\varphi$ is admissible.
\end{enumerate}
Let $T\colon X \to Y$ be a bounded positively homogeneous operator. Let $r\in(0, \|T\|)$.

Assume that there are $\tau\in(0,1]$ and a set $S\subseteq\RR$ of finite positive measure such that for each $\sigma >0$ and $\varepsilon\in(0, 1)$ there are $m\in\en$, functions $f_i\in X$ and pairwise disjoint sets $E_i\subseteq S$, $i=1,\dots,m$, each of positive measure, which in the case \ref{item:small_m} is smaller than $t_0$, and such that the following properties hold:
\begin{enumerate}[label={\rm(\roman*)},ref={\rm(\roman*)}]
\item\label{item:unit_ball} $\|f_i\|_X=1$ for every $i\in\{1,\dots,m\}$,
\item\label{item:large_measure} $\sum_{i=1}^m \mu(E_i)\ge \tau \mu(S)$,
\item\label{item:uniform_bound_from_below} $s_i\ge \sigma$ for every $i\in\{1,\dots,m\}$,
\item\label{item:large_marc_norm} $s_i\varphi(\mu(E_i))\ge (1-\varepsilon)r$ for every $i\in\{1,\dots,m\}$,
\end{enumerate}
where
\begin{equation*}
    s_i = \begin{cases}
        \essinf_{E_i} |Tf_i| \quad &\text{if $Y=m_{\varphi}(\RRR, \mu)$},\\
        \frac1{\mu(E_i)}\int_{E_i} |Tf_i| \, \d\mu \quad &\text{if $Y=M_{\varphi}(\RRR, \mu)$}.
    \end{cases}
\end{equation*}
Then $\bmc(T)\ge r$.
\end{theorem}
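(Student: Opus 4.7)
\emph{Proof plan.} I would argue by contradiction. Suppose $\bmc(T)<r$, and fix $\rho\in(\bmc(T),r)$ together with a finite cover $T(B_X)\subseteq\bigcup_{j=1}^N(g_j+\rho B_Y)$. After discarding the balls disjoint from $T(B_X)$, Lemma~\ref{L:distance} yields the uniform bound $\|g_j\|_Y\le C_Y(\|T\|+r)=:M$ on the centres. Since $\rho<r$, I can choose $\varepsilon,\delta,\eta\in(0,1)$ (absorbing a factor $C_\varphi$ coming from~\eqref{E:quasiconcave_majorant_equivalent_norms_m} in case~\ref{item:small_m}) so that $\delta\eta(1-\varepsilon)r>\rho$, leaving $\sigma>0$ to be taken large at the very end. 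The hypothesis, applied with these $\varepsilon$ and $\sigma$, produces $m\in\N$, $f_i\in X$ with $\|f_i\|_X=1$, and pairwise disjoint $E_i\subseteq S$ satisfying~\ref{item:large_measure}--\ref{item:large_marc_norm}. For each~$i$ I pick $j(i)$ with $Tf_i\in g_{j(i)}+\rho B_Y$ and set $h_i=Tf_i-g_{j(i)}$, so $\|h_i\|_Y\le\rho$.

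\emph{Propagating largeness from $Tf_i$ to $g_{j(i)}$.} The core step is to turn the concentration of $Tf_i$ on $E_i$ into concentration of $g_{j(i)}$ on a sizable portion of $E_i$. In case~\ref{item:small_m}, $|Tf_i|\ge s_i$ almost everywhere on $E_i$ together with $|g_{j(i)}|\ge|Tf_i|-|h_i|$ yields $|g_{j(i)}|\ge(1-\delta)s_i$ outside $\{|h_i|>\delta s_i\}$; the rearrangement estimate $h_i^*(t)\le\rho/\varphi(t)$, the quasiconcavity inequality $\tff(\eta t)\ge\eta\tff(t)$ for the least quasiconcave majorant, and hypothesis~\ref{item:large_marc_norm} together bound $\mu(\{|h_i|>\delta s_i\})\le(1-\eta)\mu(E_i)$, producing $F_i\subseteq E_i$ with $\mu(F_i)\ge\eta\mu(E_i)$ on which $|g_{j(i)}|\ge(1-\delta)s_i\ge(1-\delta)\sigma$. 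In case~\ref{item:capital_M} I integrate instead: from $s_i\mu(E_i)=\int_{E_i}|Tf_i|\le\int_{E_i}|g_{j(i)}|+\int_{E_i}|h_i|$ and the rearrangement bound $\int_{E_i}|h_i|\le h_i^{**}(\mu(E_i))\,\mu(E_i)\le\rho\mu(E_i)/\tff(\mu(E_i))$, combined with~\ref{item:large_marc_norm} and~\ref{item:uniform_bound_from_below}, I obtain $\int_{E_i}|g_{j(i)}|\ge\sigma\mu(E_i)\bigl(1-\rho/((1-\varepsilon)r)\bigr)$.

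\emph{Aggregation and the contradiction.} Group by the pigeonhole class $I_j=\{i:j(i)=j\}$ and set $G_j=\bigcup_{i\in I_j}F_i$ in case~\ref{item:small_m} or $T_j=\sum_{i\in I_j}\mu(E_i)$ in case~\ref{item:capital_M}. Disjointness of the $E_i$'s promotes the local lower bounds into the rearrangement inequalities $g_j^*(t)\ge(1-\delta)\sigma$ for $t<\mu(G_j)$ (case~\ref{item:small_m}), respectively $g_j^{**}(T_j)\ge\sigma(1-\rho/((1-\varepsilon)r))$ (case~\ref{item:capital_M}). Pairing these with $\|g_j\|_Y\le M$ via $\varphi\cdot g_j^*\le M$ and $\varphi\cdot g_j^{**}\le M$ respectively (and passing to $\tff$ to regain monotonicity and continuity) gives
\begin{equation*}
    \tff(\mu(G_j))\le\frac{M}{C_\varphi(1-\delta)\sigma}
    \quad\text{and}\quad
    \tff(T_j)\le\frac{M}{\sigma(1-\rho/((1-\varepsilon)r))}.
\end{equation*}
Both right-hand sides tend to $0$ as $\sigma\to\infty$, and since $\tff$ is positive on $(0,\mu(\RRR))$, this forces $\mu(G_j)$, respectively $T_j$, to become arbitrarily small. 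Summing over the finitely many $j$'s and using hypothesis~\ref{item:large_measure} gives $\tau\mu(S)\le N\cdot o_{\sigma\to\infty}(1)$, which is impossible for $\sigma$ sufficiently large, contradicting $\bmc(T)<r$. The delicate step I expect to spend time on is calibrating the arithmetic in the local estimate so that $\eta$ and $1-\rho/((1-\varepsilon)r)$ remain bounded away from $0$ independently of $s_i$ and $\sigma$; this is precisely why hypothesis~\ref{item:large_marc_norm} carries the slack $(1-\varepsilon)$ and why the (almost) quasiconcavity of $\varphi$ (or access to $\tff$ via admissibility) is essential.
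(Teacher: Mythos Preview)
Your proposal is correct and follows essentially the same strategy as the paper: assume a finite cover, bound the centres via Lemma~\ref{L:distance}, propagate the largeness of $Tf_i$ on $E_i$ to largeness of the corresponding centre $g_{j(i)}$ on a definite fraction of $E_i$ (using the rearrangement estimates and (almost) quasiconcavity to control the ``bad'' set where $h_i$ is large), and then aggregate over the pigeonhole classes to contradict the norm bound on the centres. The only organisational difference is in the endgame: the paper pigeonholes \emph{first}, fixing a single $j$ with $\sum_{i\in A_j}\mu(E_i)\ge\frac{\tau}{k}\mu(S)$, and then shows directly that $\|g_j\|_Y\gtrsim\sigma\,\tff(\frac{\tau}{k}\mu(S))$ exceeds the a~priori bound for $\sigma$ large; you instead upper-bound each $\tff(\mu(G_j))$ (or $\tff(T_j)$) by $O(1/\sigma)$, invert $\tff$, and sum over all $j$ to contradict~\ref{item:large_measure}. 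Your route requires the small extra remark that either $\tff(0^+)=0$ (so $\tff(t)\le c\to 0$ forces $t\to 0$) or $\tff$ is bounded below by $L>0$ (in which case $\tff(\mu(G_j))<L$ is already impossible for any nonempty $G_j$); the paper's route sidesteps this by evaluating $\tff$ only at the fixed positive argument $\frac{\tau}{k}\mu(S)$, which is marginally cleaner but not materially different.
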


\begin{proof}
Let $\tilde{\varphi}$ be the least quasiconcave majorant of $\varphi$. In the case \ref{item:small_m}, let $C_\varphi\in(0, 1]$ be the constant from \eqref{E:quasiconcave_majorant_equivalent_norms_m}.

Suppose that $\bmc(T)<r$. Choose $\varepsilon\in(0, 1)$ so close to $0$ that
\begin{equation*}
    \bmc(T) < (1 - \varepsilon)^3 r < r.
\end{equation*}
Since $\bmc(T) < (1 - \varepsilon)^3 r$, there are $k\in\N$ and functions $g_j$, $j\in\{1,\dots,k\}$, such that
\begin{equation}\label{E:union-new}
    T(B_X)\subseteq \bigcup_{j=1}^k (g_j + (1-\varepsilon)^3rB_Y).
\end{equation}
Note that we may assume that
\begin{equation}\label{EE:out}
	\nrm{g_j}_{Y}\le C(\nrm{T}+\rpar)
		\quad\text{for each $j\in\{1,\dots,k\}$},
\end{equation}
where
\begin{equation*}
    C = \begin{cases}
        \frac{2}{C_\varphi} \quad &\text{if $Y=m_\varphi$,} \\
        1 \quad &\text{if $Y=M_\varphi$.}
    \end{cases}
\end{equation*}
Indeed, if, for some $j\in\{1,\dots,k\}$, one would have $\nrm{g_j}_{Y}>C(\nrm{T}+\rpar)$, then, by Lemma~\ref{L:distance}, one would get
\begin{equation*}
	T(B_{X})
		\cap \left(g_j + (1-\varepsilon)^3rB_{Y}\right)
		= \emptyset,
\end{equation*}
and so the function $g_j$ can be excluded from the collection on the right-hand
side of~\eqref{E:union-new}.

Now, choose $\sigma>0$ so large that either
\begin{equation}\label{E:P:general-lower:choice_sigma_small_m}
\varepsilon \sigma \tff\big(\varepsilon\tfrac{\tau}{k}\;\mu(S)\big) > \frac{2}{C_\varphi^2}(\nrm{T}+\rpar)
\end{equation}
if $Y=m_\varphi$ or
\begin{equation}\label{E:P:general-lower:choice_sigma_capital_M}
\varepsilon \sigma \tff(\tfrac{\tau}k\mu(S)) > \nrm{T}+\rpar
\end{equation}
if $Y=M_\varphi$. We can now find $m$, $f_i$ and $E_i$ for $i\in\{1,\dots,m\}$, whose existence is guaranteed by the assumptions.
We claim that there is $j\in\{1,\dots,k\}$ such that
\begin{equation}\label{E:new-star}
    \sum_{i\in A_j} \mu(E_i) \ge \frac{\tau}{k}\;\mu(S),
\end{equation}
where
\begin{equation*}
    A_j=\{i\in\{1,\dots,m\}: \|g_j-Tf_i\|_Y \leq (1-\varepsilon)^3r\}\quad\text{for $j\in\{1,\dots,k\}$.}
\end{equation*}
Indeed, if
\begin{equation*}
    \sum_{i\in A_j}\mu(E_i) < \frac{\tau}{k}\;\mu(S) \quad\text{for every $j\in\{1,\dots,k\}$,}
\end{equation*}
then, since each $i\in\{1,\dots,m\}$ belongs to some $A_j$, one would have
\begin{equation*}
    \sum_{i=1}^{m} \mu(E_i) \le \sum_{j=1}^{k}\sum_{i\in A_j} \mu(E_i) < \sum_{j=1}^{k}\frac{\tau}{k}\mu(S)=\tau\mu(S),
\end{equation*}
which is impossible owing to \ref{item:large_measure}. We fix such a $j$.

We consider the case~\ref{item:small_m} first. Let $i\in A_j$. Set $F_i=\{x\in E_i\colon |g_j(x)|\ge \varepsilon s_i\}$.
We have $|Tf_i - g_j|\ge (1-\varepsilon)s_i$ on $E_i\setminus F_i$. Consequently, using \eqref{E:f*-alt}, \eqref{E:quasiconcave_majorant_equivalent_norms_m}, the definition of $A_j$, and~\ref{item:large_marc_norm}, we estimate
\begin{equation*}
    (1-\varepsilon)s_i\varphi(\mu(E_i\setminus F_i))\leq \|Tf_i-g_j\|_{m_{\varphi}} \le (1-\varepsilon)^3r \leq (1-\varepsilon)^2 s_i\varphi(\mu(E_i)).
\end{equation*}
Hence
\begin{equation*}
    \varphi(\mu(E_i\setminus F_i))\leq (1-\varepsilon)\varphi(\mu(E_i)).
\end{equation*}
 Note that, since $t\mapsto \frac{\varphi(t)}{t}$ is nonincreasing on the interval $(0, t_0)$, it follows that
\begin{equation*}
    \mu(E_i\setminus F_i)\le (1-\varepsilon)\mu(E_i),
\end{equation*}
whence
\begin{equation}\label{E:general-lower:eq1}
    \mu(F_i)\ge \varepsilon\mu(E_i).
\end{equation}
Indeed,
\begin{equation*}
(1-\varepsilon) \geq \frac{\varphi(\mu(E_i\setminus F_i))}{\varphi(\mu(E_i))} = \frac{\varphi(\mu(E_i\setminus F_i))}{\mu(E_i\setminus F_i)}\frac{\mu(E_i)}{\varphi(\mu(E_i))} \frac{\mu(E_i\setminus F_i)}{\mu(E_i)}\geq \frac{\mu(E_i\setminus F_i)}{\mu(E_i)}.
\end{equation*}

Now, set $F = \bigcup_{i \in A_j} F_i$. Since the sets $F_i$, $i\in A_j$, are disjoint, we have
\begin{equation}\label{E:general-lower:eq2}
    \mu(F) \geq \varepsilon \sum_{i\in A_j} \mu(E_i)
\end{equation}
thanks to \eqref{E:general-lower:eq1}. Set $s = \min_{i \in A_j} s_i$, and note that $s\geq \sigma$ thanks to~\ref{item:uniform_bound_from_below}. Finally, using~\eqref{E:quasiconcave_majorant_equivalent_norms_m}, the fact that $g_j\geq s$ $\mu$-a.e.~in $F$, \eqref{E:general-lower:eq2}, \eqref{E:new-star}, and the fact that $\tff$ is nondecreasing, we obtain
\begin{align*}
   \frac1{C_\varphi} \|g_j\|_{m_{\varphi}} &\geq \|g_j\|_{m_{\tff}} \geq \|g_j\chi_F\|_{m_{\tff}} \geq \varepsilon s \tff(\mu(F)) \\
    &\ge \varepsilon s\tff\Big(\varepsilon\sum_{i\in A_j}\mu(E_i)\Big)\ge \varepsilon \sigma \tff\big(\varepsilon \tfrac{\tau}{k}\;\mu(S)\big).
\end{align*}
However, this combined with \eqref{E:P:general-lower:choice_sigma_small_m} contradicts \eqref{EE:out}. Therefore, $\alpha(T)\ge r$.

It remains to consider the case~\ref{item:capital_M}. Let $i\in A_j$. Using~\eqref{E:f**-alt}, \eqref{E:quasiconcave_majorant_equal_norms_M}, and the assumption \ref{item:large_marc_norm}, we have
\begin{align*}
    \frac{\tff(\mu(E_i))}{\mu(E_i)}\int_{E_i}|Tf_i-g_j| \, \d\mu &\le\|Tf_i-g_j\|_{M_{\tff}} = \|Tf_i-g_j\|_{M_{\varphi}} \\
    &\le (1-\varepsilon)^3r \leq (1-\varepsilon)^2s_i\varphi(\mu(E_i)) \\
    &\leq (1-\varepsilon)s_i\tff(\mu(E_i)).
\end{align*}
Hence
\begin{equation*}
    \frac1{\mu(E_i)}\int_{E_i}|Tf_i-g_j| \, \d\mu \le (1-\varepsilon)s_i.
\end{equation*}
Consequently,
\begin{align*}
    \frac1{\mu(E_i)}\int_{E_i}|g_j| \, \d\mu &\ge \frac1{\mu(E_i)}\int_{E_i}|Tf_i| \, \d\mu -\frac1{\mu(E_i)}\int_{E_i}|Tf_i-g_j| \, \d\mu \\
    &\ge s_i-(1-\varepsilon)s_i = \varepsilon s_i\ge \varepsilon \sigma.
\end{align*}
Now, set $E=\bigcup_{i \in A_j}E_i$, and note that
\begin{equation*}
    \int_{E}|g_j| \, \d\mu =\sum_{i \in A_j} \mu(E_i)\frac1{\mu(E_i)}\int_{E_i}|g_j| \, \d\mu \ge
    \varepsilon \sigma \sum_{i \in A_j} \mu(E_i) = \varepsilon \sigma \mu(E).
\end{equation*}
Hence
\begin{equation}\label{E:general-lower:eq3}
    g_j^{**}(\mu(E)) \geq \varepsilon \sigma
\end{equation}
thanks to \eqref{E:f**-alt}.
Finally, using~\eqref{E:quasiconcave_majorant_equal_norms_M}, \eqref{E:general-lower:eq3}, \eqref{E:new-star}, and the fact that $\tff$ is nondecreasing, we obtain
\begin{equation*}
    \|g_j\|_{M_{\varphi}} = \|g_j\|_{M_{\tff}} \ge \varepsilon \sigma \tff(\mu(E)) \ge \varepsilon \sigma \tff(\tfrac{\tau}k\mu(S)).
\end{equation*}
However, this combined with \eqref{E:P:general-lower:choice_sigma_capital_M} contradicts \eqref{EE:out}. Hence, once again, we obtain that $\alpha(T)\ge r$, as desired.
\end{proof}

In the following application of Theorem~\ref{T:general-lower}, $(\RRR,\mu)$ is an open subset of $\R^n$ endowed with the Lebesgue measure and $T=I$ is the embedding operator. First, we need a definition. We say that a quasinormed space $X$ of measurable functions on an open set $\Omega\subseteq\rn$ is \emph{translation invariant} if, for every $f\in X$ and $y\in\rn$ such that $y + \spt f \subseteq \Omega$, the function $g(x) = f(x - y)\chi_{y + \spt f}(x)$, $x\in\Omega$, is also in $X$ and $\|g\|_X = \|f\|_X$. We note that many customary spaces, including Sobolev ones, possess this property. 

\begin{theorem} \label{T:max-noncompact}
Let $n\in\N$ and $\Omega\subseteq\rn$ be an open set. Assume that $\varphi\colon(0, |\Omega|) \to (0, \infty)$ and $Y$ are as in \ref{item:small_m} or \ref{item:capital_M} in Theorem~\ref{T:general-lower}. Furthermore, assume that $\varphi$ satisfies
\begin{equation}\label{E:varphi-vanishing-at-0}
    \lim_{t\to0_+}\varphi(t)=0.
\end{equation}
Let $X$ be a quasinormed space of measurable functions defined on $\Omega$ that is translation invariant and such that $X\subseteq Y$. Let $\alpha_0\in(0, \|I\|)$, where $I\colon X \to Y$ is the embedding operator. Assume that there are a point $x_0\in\Omega$ and a sequence of functions $\{f_j\}_{j=1}^\infty \subseteq X$ such that, for every $j\in\N$, $\|f_j\|_X = 1$, the support of $f_j$ is inside a ball $B_j\subseteq\Omega$ centered at $x_0$ with radius $r_j\searrow 0$, $f_j$ is radially nonincreasing with respect to $x_0$, and  $\| f_j \|_Y \geq \alpha_0$. Then $\alpha(I) \geq \alpha_0$.

In particular, if such a sequence exists for every $\alpha_0\in(0, \|I\|)$, then the embedding $I\colon X\to Y$ is maximally noncompact.
\end{theorem}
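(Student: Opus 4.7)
The plan is to feed Theorem~\ref{T:general-lower} with $r = \alpha_0$, supplying the required data by packing many translated copies of a single $f_j$ (for $j$ large) inside a fixed reference ball. Fix once and for all $S = B(x_*, R) \subseteq \Omega$ with $B(x_*, 2R) \subseteq \Omega$, and set $\tau = \omega_n/2^{n+1}$; tiling $S$ by cubes of side $2\rho$ shows that for every sufficiently small $\rho > 0$ one can find pairwise disjoint balls $B(z_i, \rho) \subseteq S$ with $\sum_i |B(z_i, \rho)| \ge \tau |S|$. In case~\ref{item:capital_M}, it is legitimate to apply Theorem~\ref{T:general-lower}\ref{item:capital_M} with $\tff$ in place of $\varphi$, because $\tff$ is admissible and $M_{\tff} = M_\varphi$ with equal norms by \eqref{E:quasiconcave_majorant_equal_norms_M}. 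Accordingly, set $\psi = \varphi$ and $u_j = f_j^*$ in case~\ref{item:small_m}, and $\psi = \tff$ and $u_j = f_j^{**}$ in case~\ref{item:capital_M}; in either case $\sup_t \psi(t) u_j(t) = \|f_j\|_Y \ge \alpha_0$.

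Hypothesis \eqref{E:varphi-vanishing-at-0} together with admissibility forces $\psi(t) \to 0$ as $t \to 0^+$: in case~\ref{item:capital_M} this follows by splitting
\begin{equation*}
\tff(t) \le \Phi(\delta) + t\,\Phi(\mu(\RRR))/\delta, \qquad \Phi(s) = \sup_{\tau \le s} \varphi(\tau),
\end{equation*}
and letting $\delta\to 0$ after $t\to 0$. Next, since $f_j$ is supported in a ball of measure $\omega_n r_j^n$, we have $f_j^* \equiv 0$ past $\omega_n r_j^n$ and $f_j^{**}(t) = (\omega_n r_j^n / t)f_j^{**}(\omega_n r_j^n)$ for $t \ge \omega_n r_j^n$; combined with the monotonicity of $\tff(t)/t$ in case~\ref{item:capital_M}, this yields $\sup_{t \ge \omega_n r_j^n}\psi(t) u_j(t) = \psi(\omega_n r_j^n)u_j(\omega_n r_j^n)$. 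Hence the sup defining $\|f_j\|_Y$ is attained on $(0, \omega_n r_j^n]$, and for every $\varepsilon\in(0,1)$ we obtain $t_j^* \in (0, \omega_n r_j^n]$ with $\psi(t_j^*)u_j(t_j^*) > (1-\varepsilon)\alpha_0$.

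Given $\sigma > 0$ and $\varepsilon \in (0, 1)$, fix $j$ so large that $u_j(t_j^*) \ge (1-\varepsilon)\alpha_0/\psi(t_j^*) \ge \sigma$ (possible since $t_j^* \le \omega_n r_j^n \to 0$ forces $\psi(t_j^*)\to 0$), that $r_j \le R$, and that $\rho_j := (t_j^*/\omega_n)^{1/n}$ is small enough to trigger the packing, yielding centers $z_i \in S$, $i=1,\dots,m$. Set
\begin{equation*}
f_i(x) = f_j\bigl(x - (z_i - x_0)\bigr)\,\chi_{B(z_i, r_j)}(x), \qquad E_i = B(z_i, \rho_j).
\end{equation*}
Translation invariance, together with $B(z_i, r_j) \subseteq B(x_*, 2R) \subseteq \Omega$, gives $f_i \in X$ with $\|f_i\|_X = 1$; the radial nonincreasingness of $f_i$ about $z_i$ combined with \eqref{E:f*-alt} and \eqref{E:f**-alt} identifies $s_i = u_j(t_j^*)$. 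Conditions \ref{item:unit_ball}--\ref{item:large_marc_norm} are now immediate — in particular $s_i\,\psi(|E_i|) = u_j(t_j^*)\psi(t_j^*) > (1-\varepsilon)\alpha_0$ — so Theorem~\ref{T:general-lower} delivers $\alpha(I) \ge \alpha_0$, and letting $\alpha_0 \nearrow \|I\|$ yields the final sentence of the theorem.

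The crucial difficulty lies in arranging \ref{item:uniform_bound_from_below} for \emph{arbitrary} $\sigma$, which forces $\psi(t_j^*) \to 0$ and hence $t_j^* \to 0$. In case~\ref{item:capital_M}, this is precisely where the passage to the quasiconcave majorant $\tff$ becomes indispensable, since the sup in $\|f_j\|_{M_\varphi}$ need not localize near the origin when $\varphi$ itself lacks monotonicity.
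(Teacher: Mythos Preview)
Your proof is correct and follows essentially the same strategy as the paper: verify the hypotheses of Theorem~\ref{T:general-lower} with $r=\alpha_0$ by packing many translated copies of a single $f_j$ into a fixed reference set, taking each $E_i$ to be the centred ball of the measure at which the near-extremal value of $\|f_j\|_Y$ is witnessed. The tactical differences are minor: the paper packs via a cube $Q$ centred at $x_0$ and its dyadic subdivisions (yielding $\tau=|B_0|/(2^n|Q|)$) rather than tiling a ball by small cubes, and in case~\ref{item:capital_M} it works directly with $\varphi$ rather than passing to $\tff$. Your explicit switch to $\tff$, together with the verification that $\tff(0^+)=0$ and the use of the monotonicity of $\tff(t)/t$ to localize the supremum defining $\|f_j\|_{M_\varphi}$ to $(0,\omega_n r_j^n]$, is a clean way to guarantee that the witnessing point $t_j^*$ can be forced to~$0$; the paper asserts the analogous localization (``there is $t_1\in(0,|B|)$'') more tersely.
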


\begin{proof}
Since $\Omega$ is open, there is a (closed) cube $Q\subseteq\Omega$ centered at $x_0$ whose edges are parallel to the coordinate axes. Furthermore, we may assume that $Q$ is so small that $x - x_0 + Q \subseteq \Omega$ for every $x\in Q$. Let $B_0$ be the ball inscribed in $Q$. Set
\begin{equation}\label{E:T:max-noncompact:tau}
\tau = \frac{|B_0|}{2^n|Q|}.
\end{equation}
Let $\mathfrak Q_k$, $k\in\N_0$, be the $k$th subdivision of $Q$ consisting of the $2^{kn}$ nonoverlaping subcubes of $Q$ whose measure is $\frac{|Q|}{2^{kn}}$.
Let $\alpha_0\in(0, \|I\|)$. Fix arbitrary $\varepsilon\in(0, 1)$ and $\sigma > 0$. Thanks to \eqref{E:varphi-vanishing-at-0}, there is $t_0\in(0, |\Omega|)$ such that
\begin{equation}\label{E:T:max-noncompact:t0}
\frac{(1 - \varepsilon)\alpha_0}{\varphi(t)} > \sigma \quad \text{for every $t\in(0, t_0)$},
\end{equation}
and which in the case \ref{item:small_m} is so small that the function $\frac{\varphi(t)}{t}$ is nonincreasing on the interval $(0,t_0)$. Furthermore, we can find $j\in\N$ such that $f = f_j\in B_X$ is supported inside a ball $B = B_j \subseteq B_0$ centered at $x_0$ with $|B| < t_0$, $f$ is radially nonincreasing with respect to $x_0$, and $\| f \|_Y \geq \alpha_0$. It follows from $\|f\|_{Y} \geq \alpha_0 > (1 - \varepsilon)\alpha_0$ combined with \eqref{E:f*-alt} or \eqref{E:f**-alt} that there is $t_1\in(0, |B|)$ such that
\begin{equation*}
\varphi(t_1) \sup_{|E| = t_1}s_{E, f} > (1 - \varepsilon)\alpha_0,
\end{equation*}
where
\begin{equation*}
s_{E, f} = \begin{cases}
\essinf_{x\in E} |f(x)| \quad &\text{if $Y = m_\varphi(\Omega)$}, \\
\frac1{|E|} \int_E |f(x)| \, \d x \quad &\text{if $Y = M_\varphi(\Omega)$}.
\end{cases}
\end{equation*}
In both cases, there is a set $E\subseteq B\subseteq B_0$ with $|E| = t_1 < t_0$ such that
\begin{align}
\varphi(t_1) s_{E,f} > (1 - \varepsilon)\alpha_0 \label{E:T:max-noncompact:(iv)satisfied},
    \\
\intertext{that is}
s_{E,f}  > \frac{(1 - \varepsilon)\alpha_0}{\varphi(t_1)} > \sigma, \label{E:T:max-noncompact:(iii)satisfied}
\end{align}
where the last inequality follows from \eqref{E:T:max-noncompact:t0}. Furthermore, since $f$ is radially nonincreasing with respect to $x_0$, the set $E$ may be chosen as a ball centered at $x_0$. Let $Q_E$ be the cube in which $E$ is inscribed. Note that
\begin{equation}\label{E:volume_ratio_inscribed_ball}
    \frac{|Q_E|}{|E|} = \frac{|Q|}{|B_0|} = \frac{2^n}{\omega_n},
\end{equation}
where $\omega_n$ is the volume of the unit ball in $\rn$.

Next, let $k\in\N_0$ be the unique integer such that
\begin{equation}\label{E:T:max-noncompact:level_of_subdivision}
\frac{|B_0|}{2^{n(k+1)}} \leq t_1 < \frac{|B_0|}{2^{nk}}.
\end{equation}
Let $\mathfrak Q_k = \{Q_1, \dots, Q_m\}$, where $m = 2^{nk}$. For each $j\in\{1,\dots,m\}$, denote by $x_{Q_j}$ the center of the cube $Q_j$. We define the functions $g_j$ and the sets $E_j$ as
\begin{align*}
g_j(x) &= f(x + x_0 - x_{Q_j})\chi_{x_{Q_j} - x_0 + \spt f}(x),\ x\in\Omega,\\
\intertext{and}
E_j &= x_{Q_j} - x_0 + E
\end{align*}
for $j=1, \dots, m$.
Now, it is easy to see that $s_{E_j, g_j} = s_{E,f}$ for every $j\in\{1, \dots, m\}$. Therefore, both \eqref{E:T:max-noncompact:(iv)satisfied} and \eqref{E:T:max-noncompact:(iii)satisfied} are satisfied with $s_{E,f}$ replaced by $s_{E_j, g_j}$. Furthermore, $\|g_j\|_X = \|f\|_X = 1$ for every $j\in\{1, \dots, m\}$ thanks to the translation invariance of $X$.
Finally, we claim that the sets $E_j$, $j=1, \dots, m$, are pairwise disjoint. To this end, note that, for every $j=1, \dots, m$, the ball $E_j$ is inscribed in the cube $x_{Q_j} - x_0 + Q_E$ and we have
\begin{equation*}
    |Q_E| = \frac{2^n}{\omega_n} t_1 < \frac{2^n}{\omega_n} \frac{|B_0|}{2^{nk}} = \frac{|Q|}{2^{nk}} = |Q_j|
\end{equation*}
thanks to \eqref{E:volume_ratio_inscribed_ball} and \eqref{E:T:max-noncompact:level_of_subdivision}. Since the cubes $x_{Q_j} - x_0 + Q_E$ and $Q_j$ are concentric, it follows that $E_j\subseteq x_{Q_j} - x_0 + Q_E \subseteq Q_j$. Consequently, since the cubes in $\mathfrak Q_k$ are nonoverlapping, the sets $E_j$, $j=1, \dots, m$, are pairwise disjoint. Moreover, thanks to \eqref{E:T:max-noncompact:tau} and \eqref{E:T:max-noncompact:level_of_subdivision}, we have
\begin{equation*}
\sum_{j = 1}^m |E_j| = m |E| = 2^{nk} t_1 \geq \frac{2^{nk}|B_0|}{2^{n(k+1)}} = \frac{|B_0|}{2^n} = \tau |Q|.
\end{equation*}
Therefore, the assumptions of Theorem~\ref{T:general-lower} are satisfied (with $T = I$, $S = Q$, and with $r$ and $\{f_i\}_{i = 1}^m$ there equal to $\alpha_0$ and $\{g_i\}_{i = 1}^m$, respectively), whence it follows that $\alpha(I) \geq \alpha_0$.
\end{proof}

As a corollary of the preceding theorem, we obtain that embeddings with a certain shrinking property into Marcinkiewicz spaces are maximally noncompact.
\begin{definition}
Let $n\in\N$ and let $X$ and $Y$ be quasinormed spaces of measurable functions defined on an open set $\Omega\subseteq\rn$.
We say that the embedding $I\colon X\to Y$ has the \textit{shrinking property} if there is a point $x_0\in\Omega$ such that $\nrm{I_{B}} = \nrm{I}$ for every open ball $B\subseteq\Omega$ centered at $x_0$, where
\begin{equation*}
	\nrm{I_{B}} = \sup\left\{\frac{\|f\|_{Y}}{\|f\|_{X}}: \ \spt u \subseteq B\ \text{and $f$ is radially nonincreasing with respect to $x_0$}\right\}.
\end{equation*}
\end{definition}

\begin{corollary}\label{C:max-noncompact_shrinking}
Let $\Omega$, $\varphi$, $X$, and $Y$ be as in Theorem~\ref{T:max-noncompact}. If the embedding $I\colon X \to Y$ has the shrinking property, then it is maximally noncompact.
\end{corollary}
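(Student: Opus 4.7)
The plan is to verify the hypothesis of Theorem~\ref{T:max-noncompact} for every $\alpha_0\in(0,\|I\|)$ and then let $\alpha_0\nearrow\|I\|$. Since all the assumptions on $\Omega$, $\varphi$, $X$, and $Y$ in the corollary are inherited verbatim from that theorem, the only remaining task is to produce, for each $\alpha_0<\|I\|$, a sequence $\{f_j\}\subseteq X$ of unit-norm, radially nonincreasing functions whose supports shrink to $x_0$ and whose $Y$-norms stay above $\alpha_0$.

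Let $x_0\in\Omega$ be the point supplied by the shrinking property and fix $\alpha_0\in(0,\|I\|)$. Because $\Omega$ is open, we can choose a sequence of radii $r_j\searrow 0$ such that each ball $B_j=B(x_0,r_j)$ is contained in $\Omega$. The shrinking property gives $\|I_{B_j}\|=\|I\|>\alpha_0$ for every $j$, so by definition of $\|I_{B_j}\|$ as a supremum there exists $\tilde f_j\in X$, radially nonincreasing with respect to $x_0$, with $\spt \tilde f_j\subseteq B_j$ and
\begin{equation*}
\frac{\|\tilde f_j\|_{Y}}{\|\tilde f_j\|_{X}}>\alpha_0.
\end{equation*}
Setting $f_j=\tilde f_j/\|\tilde f_j\|_X$, positive homogeneity of the quasinorms and the trivial invariance of ``supported in $B_j$'' and ``radially nonincreasing with respect to $x_0$'' under positive scaling yield $\|f_j\|_X=1$, $\spt f_j\subseteq B_j$, $f_j$ radially nonincreasing with respect to $x_0$, and $\|f_j\|_Y>\alpha_0$.

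The sequence $\{f_j\}$ thus satisfies all the hypotheses of Theorem~\ref{T:max-noncompact}, which therefore gives $\alpha(I)\geq\alpha_0$. Since $\alpha_0\in(0,\|I\|)$ was arbitrary, $\alpha(I)\geq\|I\|$, and combining with the universal bound $\alpha(I)\leq\|I\|$ yields $\alpha(I)=\|I\|$, i.e., maximal noncompactness. There is no real obstacle here: once the shrinking property is unpacked into its definition and Theorem~\ref{T:max-noncompact} is in place, the conclusion follows essentially by inspection.
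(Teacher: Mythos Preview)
Your proof is correct and is precisely the straightforward derivation the paper intends: the paper states the corollary without proof, treating it as an immediate consequence of unpacking the shrinking property and invoking the ``in particular'' clause of Theorem~\ref{T:max-noncompact}. Your argument does exactly this, and the only cosmetic difference is that you spell out the $\alpha_0\nearrow\|I\|$ step rather than appealing directly to that clause.
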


We conclude this section by providing a certain alternative approach to the maximal noncompactness of embeddings into Marcinkiewicz spaces.  The following theorem generalizes~\cite[Theorem~4.1]{Lan:20}, which is limited to power functions. Compared to Theorem~\ref{T:max-noncompact}, it does not require $X$ to be translation invariant, nor do the extremals need to be radially nonincreasing. Furthermore, unlike in the rest of this section, the function $\varphi$ need not be even admissible (still less almost quasiconcave) when $Y = m_\varphi$. For example, the function $\varphi(t) = t^\frac1{p}$ for $p \in (0,1)$ is not almost quasiconcave (and not admissible either when $\mu(\RRR) = \infty$), but it satisfies the assumptions of the following theorem with $Y = m_\varphi$ nonetheless. Recall that $m_\varphi = L^{p,\infty}$ is the weak Lebesgue space for this choice of $\varphi$. On the other hand, in the following theorem, the extremals need to be equimeasurable and the assumption~\eqref{T:maximal_noncomp_alt_m_phi:not_too_constant} below requires $\varphi$ to grow sufficiently fast (or rather, $\varphi$ cannot be too slowly varying). A typical example of an almost-quasiconcave function not satisfying the assumption~\eqref{T:maximal_noncomp_alt_m_phi:not_too_constant} is the function $\varphi(t) = \log(2\mu(\RRR)/t)^\alpha$ for $\alpha\leq0$, assuming $\mu(\RRR) < \infty$. Therefore, while the following theorem and Theorem~\ref{T:max-noncompact} overlap to some extent, they are noncomparable in general and complement each other. Moreover, even where they overlap, it may sometimes be easier to verify the assumptions of one than those of the other.

We say that two measurable functions are \emph{equimeasurable} if their nonincreasing rearrangements coincide. The following simple observation, which follows from the definition of the nonincreasing rearrangement, will be useful in the proof below. For every $N\in\N$ and every collection of measurable functions $\{f_j\}_{j = 1}^N$ with mutually disjoint supports, we have
\begin{equation}\label{E:rearrangement_sum_disjoint_supports_lower_bound}
    \Big( \sum_{j = 1}^N f_j\Big)^*(N t) \geq \min_{j = 1, \dots, N} f_j^*(t) \quad \text{for every $t\in(0, \infty)$}.
\end{equation}
\begin{theorem}\label{T:maximal_noncomp_alt_m_phi}
Let $X$ be a quasinormed space of measurable functions on $(\RRR, \mu)$. Let $\varphi\colon (0, \mu(R)) \to (0, \infty)$ be a function satisfying
\begin{equation}\label{T:maximal_noncomp_alt_m_phi:not_too_constant}    
\lim_{a \to \infty} \inf_{t\in(0, \mu(R)/a)} \frac{\varphi(a t)}{\varphi(t)} = \infty.
\end{equation}
Assume one of the following:
\begin{enumerate}[label={\rm(\alph*)},ref={\rm(\alph*)},leftmargin=6ex]
    \item\label{item:small_m_alt} $Y = m_\varphi(R, \mu)$, $\varphi\in\Delta_2$, and
    \begin{equation}\label{T:maximal_noncomp_alt_m_phi:dilation}
        \inf_{\theta\in(0,1)} \sup_{t\in(0, \mu(R))} \frac{\varphi(t)}{\varphi(\theta t)} \leq 1;
    \end{equation}
    \item\label{item:capital_M_alt} $Y = M_\varphi(R, \mu)$ and $\varphi$ is admissible.
\end{enumerate}
Let $T\colon X \to Y$ be a bounded positively homogeneous operator. Let $\lambda\in(0, \|T\|)$.

Assume that, for every $a\in(0, \mu(R))$ and $M\in\N$, there is a collection of functions $\{f_j\}_{j = 1}^M \subseteq B_X$ such that
\begin{align}
&\text{the supports of $Tf_j$, $j=1, \dots, M$, are mutually disjoint}, \label{T:maximal_noncomp_alt_m_phi:extremals_disj_spt}\\
&\mu(\spt Tf_j) \leq a \quad \text{for every $j=1, \dots, M$}, \label{T:maximal_noncomp_alt_m_phi:extremals_small_spt}\\
&\text{the functions $Tf_j$, $j=1, \dots, M$, are equimeasurable}, \label{T:maximal_noncomp_alt_m_phi:extremals_equimea}\\
\intertext{and that}
&\| Tf_j \|_{Y} \geq \lambda. \label{T:maximal_noncomp_alt_m_phi:extremals_extremality}
\end{align}
Then $\bmc(T)\geq \lambda$. In particular, if such a collection of functions $\{f_j\}_{j = 1}^M$ exists for every $\lambda\in(0, \|T\|)$, then the operator $T$ is maximally noncompact.
\end{theorem}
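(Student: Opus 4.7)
The plan is a contradiction argument: assume $\bmc(T) < \lambda$, fix $\rho \in (\bmc(T), \lambda)$, and cover $T(B_X) \subseteq \bigcup_{i = 1}^{k} (g_i + \rho B_Y)$ where, by Lemma~\ref{L:distance}, we may assume $\|g_i\|_Y \leq C_Y(\|T\| + \rho)$ for each $i$. For a large $N$ to be chosen, pick $a \in (0, \mu(R)/N)$ and apply the hypothesis with $M = kN$, obtaining $\{f_j\}_{j=1}^{M}$ whose images $Tf_j$ are equimeasurable with pairwise disjoint supports of measure at most $a$ and with $\|Tf_j\|_Y \geq \lambda$. The pigeonhole principle produces a single centre $g = g_i$ and indices $j_1, \dots, j_N$ with $\|Tf_{j_l} - g\|_Y \leq \rho$ for every $l$. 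In both cases the aim is to show that equimeasurability together with disjointness forces $\|g\|_Y$ to grow with $N$, contradicting the uniform bound from Lemma~\ref{L:distance}.

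For case~\ref{item:capital_M_alt}, the identity $\|h\|_{M_\varphi} = \sup_E \frac{\varphi(\mu(E))}{\mu(E)} \int_E |h|\,\d\mu$ coming from~\eqref{E:f**-alt} (together with $\|\cdot\|_{M_\varphi} = \|\cdot\|_{M_{\tilde\varphi}}$) allows one to pick $\delta \in (0, 1)$ with $(1-\delta)\lambda > \rho$ and some $e \leq a$ together with $E_1 \subseteq \spt Tf_{j_1}$ of measure $e$ such that $\frac{\varphi(e)}{e}\int_{E_1}|Tf_{j_1}|\,\d\mu > (1-\delta)\lambda$. Equimeasurability yields top-level sets $E_l \subseteq \spt Tf_{j_l}$ of common measure $e$ on which $\frac{1}{e}\int_{E_l}|Tf_{j_l}|\,\d\mu = \sigma := (Tf_{j_1})^{**}(e)$ for every $l$. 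The proximity $\|Tf_{j_l} - g\|_{M_\varphi} \leq \rho$ gives $\int_{E_l}|Tf_{j_l} - g|\,\d\mu \leq e\rho/\varphi(e)$, hence $\int_{E_l}|g|\,\d\mu \geq e(\sigma - \rho/\varphi(e))$. Summing over the disjoint $E_l$'s and combining with $\int_{\bigcup_l E_l}|g|\,\d\mu \leq Ne\,\|g\|_{M_\varphi}/\varphi(Ne)$ leads to
\begin{equation*}
\frac{\varphi(Ne)}{\varphi(e)}\bigl((1-\delta)\lambda - \rho\bigr) \leq \|g\|_{M_\varphi} \leq \|T\| + \rho.
\end{equation*}
Since $e < \mu(R)/N$, assumption~\eqref{T:maximal_noncomp_alt_m_phi:not_too_constant} forces the left-hand side to blow up as $N \to \infty$, a contradiction.

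Case~\ref{item:small_m_alt} follows an analogous template with essential infima in place of averages via~\eqref{E:f*-alt}. I pick $e > 0$ and $\sigma$ slightly below $(Tf_{j_1})^*(e)$ so that $e = \mu\{|Tf_{j_1}| > \sigma\}$ and $\varphi(e)\sigma > (1-\delta)^2\lambda$, and set $E_l = \{x \in \spt Tf_{j_l} : |Tf_{j_l}(x)| > \sigma\}$, all of the common measure $e$ by equimeasurability. For $\gamma \in (0, 1)$ chosen so that $(1-\delta)^2\gamma\lambda > \rho$, a level-set decomposition combined with $\|Tf_{j_l} - g\|_{m_\varphi} \leq \rho$ gives $\mu\{x \in E_l : |g(x)| \geq \sigma(1-\gamma)\} \geq e - P$, where $\varphi(P) = \rho/(\sigma\gamma)$. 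Summing over $l$ and bounding the total by $\mu\{|g| \geq \sigma(1-\gamma)\}$ yields $N(e - P) \leq Q$, where $\varphi(Q) = C_Y(\|T\| + \rho)/(\sigma(1-\gamma))$. The main obstacle is that $\sigma$, and hence $P$ and $Q$, varies with the chosen extremals, so neither $e - P$ nor $Q$ can be controlled uniformly on its own. The remaining two hypotheses resolve precisely this: the $\Delta_2$ condition forces $Q/P$ to be bounded by a constant $D$ depending only on $\lambda, \rho, \gamma, C_Y, \|T\|$, and the $\Delta_2$-constant of $\varphi$, while \eqref{T:maximal_noncomp_alt_m_phi:dilation} supplies $\theta \in (0, 1)$ for which $\varphi(P/\theta) < \varphi(e)$, yielding $e - P > P(1-\theta)/\theta$. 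Combining the two estimates gives $N(1-\theta)/\theta \leq D$, a uniform upper bound on $N$ that contradicts the initial freedom to pick $N$ arbitrarily large.
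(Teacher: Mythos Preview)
Your overall architecture---contradiction, finite cover, pigeonhole to find $N$ extremals near a single centre $g$, then force $\|g\|_Y$ to be too large---matches the paper's. The difficulties are in the details, and there are two genuine gaps.

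\textbf{Case~\ref{item:capital_M_alt}.} You claim one can pick $e\le a$ and $E_1\subseteq\spt Tf_{j_1}$ with $\frac{\varphi(e)}{e}\int_{E_1}|Tf_{j_1}|\,\d\mu>(1-\delta)\lambda$. This is where mere admissibility of $\varphi$ bites. The identity $\|h\|_{M_\varphi}=\sup_t\varphi(t)h^{**}(t)$ ranges over all $t\in(0,\mu(\RRR))$, and the supremum need not be approached on $(0,a]$: take $\mu(\RRR)=1$, $\varphi(t)=t^2$ (admissible, satisfies the growth condition with ratio $a^2$), and $Tf_j=c\chi_S$ with $\mu(S)=a$; then $\varphi(t)(Tf_j)^{**}(t)$ equals $ct^2$ on $(0,a]$ and $cat$ on $[a,1)$, so the full norm is $ca$ while the restricted supremum is only $ca^2$. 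For small $a$ your inequality is simply unattainable. The cure is to pass to $\tff$ to localise the supremum to $(0,a]$ (as the paper does in~\eqref{T:maximal_noncomp_alt_m_phi:capital_M_eq2}), but then the final ratio becomes $\tff(Ne)/\tff(e)$ and the growth hypothesis on $\varphi$ no longer applies directly; this is precisely why the paper splits into the three cases \ref{item:possibility_small_m_or_infinite_measure}--\ref{item:possibility_capital_M_identity} and carries out a rather delicate comparison of $\tff$ with $\varphi$. Your argument is correct if $\varphi$ is almost quasiconcave (cf.\ Remark~\ref{rem:maximal_noncomp_alt_m_phi:phi_quasiconcave}(i)), but not under admissibility alone.

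\textbf{Case~\ref{item:small_m_alt}.} The step ``$\varphi(P/\theta)<\varphi(e)$, yielding $e-P>P(1-\theta)/\theta$'' silently assumes $\varphi$ is nondecreasing, which is nowhere in the hypotheses (e.g.\ $\varphi(t)=t^{1/2}(2+\sin\log t)$ satisfies $\Delta_2$, the growth condition, and the dilation condition, yet is not monotone). Also, it is the growth condition~\eqref{T:maximal_noncomp_alt_m_phi:not_too_constant}, not $\Delta_2$, that bounds $Q/P$ once $\varphi(Q)/\varphi(P)$ is fixed ($\Delta_2$ gives an \emph{upper} bound on $\varphi(2^nP)/\varphi(P)$, not a lower one). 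The paper avoids both issues by never inverting $\varphi$: it uses the subadditivity~\eqref{E:f*-subadditivity} to get $(F_j)^*\bigl((1-\theta)t_0\bigr)\ge (Tf_j)^*(t_0)-(Tf_j-F_j)^*(\theta t_0)$ and then~\eqref{T:maximal_noncomp_alt_m_phi:dilation_chosen_theta} to control the subtracted term pointwise, leading directly to $(\sum F_j)^*\bigl(N(1-\theta)t_0\bigr)\ge\varepsilon/\varphi(t_0)$ and hence to the blow-up via~\eqref{T:maximal_noncomp_alt_m_phi:not_too_constant}.
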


\begin{proof}
Set $\alpha = \bmc(T)$. Suppose that $\alpha < \lambda$, and fix any $r,\varepsilon>0$ such that
\begin{equation*}
\alpha < r < r + 2\varepsilon < \lambda.
\end{equation*}
Since $\alpha < r$, there are $m\in\N$ and functions $\{g_k\}_{k = 1}^m \subseteq Y$ such that
\begin{equation}\label{T:maximal_noncomp_alt_m_phi:union}
T(B_X) \subseteq \bigcup_{k = 1}^m \big( g_k + r B_{Y} \big).
\end{equation}
Note that we may assume that
\begin{equation}  
\label{T:maximal_noncomp_alt_m_phi:centers_not_too_big_qnorm}
    \|g_k\|_{Y} \leq 2 C \|T\| \quad \text{for every $k\in \{1, \dots, m\}$},
\end{equation}
where either $C = 1$ when $Y = M_\varphi$ or $C$ is equal to the $\Delta_2$ constant of $\varphi$, i.e.,
\begin{equation*}
  C = \sup_{t\in(0, \mu(R)/2)} \frac{\varphi(2t)}{\varphi(t)} < \infty,
\end{equation*}
when $Y = m_\varphi$. 
Indeed, if there is $k\in \{1, \dots, m\}$ such that $\|g_k\|_Y > 2 C \|T\|$, then, by Lemma~\ref{L:distance}, 
\begin{equation*}
    T(B_X) \cap \big( g_k + r B_{Y} \big)
    =\emptyset,
\end{equation*}
and we can exclude such a $g_k$ from the union in \eqref{T:maximal_noncomp_alt_m_phi:union}. Here we used the fact that $\|\cdot\|_{M_\varphi}$ is a norm, whereas $\|\cdot\|_{m_\varphi}$ is a quasinorm with $C_{m_\varphi}$ equal to $C$.

We will sometimes need to distinguish among three possibilities. Note that one of the following (mutually exclusive) three possibilities is true:
\begin{enumerate}[label=(C\arabic*), ref=(C\arabic*)]
\item\label{item:possibility_small_m_or_infinite_measure} either $Y = M_\varphi$ and $\mu(\RRR) = \infty$ or $Y = m_\varphi$;
\item\label{item:possibility_capital_M_not_identity} $Y = M_\varphi$, $\mu(\RRR) < \infty$, and for every $\alpha\in(0, \mu(\RRR))$ there is $\beta_0\in(0, \alpha)$ such that
\begin{equation*}
    \frac{\tff(\beta_0)}{\beta_0} > \frac{\tff(\alpha)}{\alpha};
\end{equation*}
\item\label{item:possibility_capital_M_identity} $Y = M_\varphi$, $\mu(\RRR) < \infty$, and there is $\alpha_0\in(0, \mu(\RRR))$ such that for every $\beta\in(0, \alpha_0)$
\begin{equation}\label{T:maximal_noncomp_alt_m_phi:identity_case_constantness}
    \frac{\tff(\beta)}{\beta} = \frac{\tff(\alpha_0)}{\alpha_0}.
\end{equation}
\end{enumerate}
Here (and below), $\tff$ is the least quasiconcave majorant of $\varphi$. If \ref{item:possibility_capital_M_identity} is the case, we fix $\alpha_0$ at this point. As an aside, we remark that the need to distinguish among \ref{item:possibility_small_m_or_infinite_measure}, \ref{item:possibility_capital_M_not_identity}, and \ref{item:possibility_capital_M_identity} is due to the fact that $\varphi$ is merely admissible in the case~\ref{item:capital_M_alt}, not necessarily almost quasiconcave (see~Remark~\ref{rem:maximal_noncomp_alt_m_phi:phi_quasiconcave} below).

In the case~\ref{item:small_m_alt},  there is $\theta\in(0,1)$ such that
\begin{equation}\label{T:maximal_noncomp_alt_m_phi:dilation_chosen_theta}
\sup_{t\in(0, \mu(R))} \frac{\varphi(t)}{\varphi(\theta t)} \leq 1 + \frac{\varepsilon}{r}
\end{equation}
thanks to \eqref{T:maximal_noncomp_alt_m_phi:dilation}. In the case~\ref{item:capital_M_alt}, we simply set $\theta = 0$. Either way, if \ref{item:possibility_small_m_or_infinite_measure} or \ref{item:possibility_capital_M_not_identity} is the case, we fix $N\in\N$ such that
\begin{equation}\label{T:maximal_noncomp_alt_m_phi:not_too_constant_chosen_theta}
\inf_{t\in(0, \mu(R)/(N(1-\theta)))} \frac{\varphi(N(1-\theta) t)}{\varphi(t)} > \frac{2 C}{\varepsilon}\|T\|,
\end{equation}
which is possible owing to \eqref{T:maximal_noncomp_alt_m_phi:not_too_constant}. If \ref{item:possibility_capital_M_identity} is the case, we fix $N\in\N$, $N\geq2$, so large that
\begin{equation}\label{T:maximal_noncomp_alt_m_phi:identity_case_choice_of_N}
    \frac{\mu(\RRR)}{N} < \alpha_0 \quad \text{and} \quad N> \frac{2\|T\|}{\varepsilon}.
\end{equation}
Next, if \ref{item:possibility_capital_M_not_identity} is the case (in particular, $\mu(\RRR) < \infty$), we fix $\beta_0\in(0, \frac{\mu(\RRR)}{N})$, for $\alpha = \frac{\mu(\RRR)}{N}$,  such that
\begin{equation}\label{T:maximal_noncomp_alt_m_phi:non_identity_jump}
        \frac{\tff(\beta_0)}{\beta_0} > \frac{\tff(\frac{\mu(\RRR)}{N})}{\frac{\mu(\RRR)}{N}}.
\end{equation}
Furthermore, let $\gamma > 1$ be such that
\begin{equation}\label{T:maximal_noncomp_alt_m_phi:non_identity_beta0}
    \beta_0 = \frac{\mu(\RRR)}{\gamma N}.
\end{equation}
If \ref{item:possibility_small_m_or_infinite_measure} or \ref{item:possibility_capital_M_identity} is the case, we set $\gamma = 1$. Now, set
\begin{align}
a &= \frac{\min\{\mu(R), 1\}}{\gamma N^2(1-\theta)} \leq \frac{\min\{\mu(R), 1\}}{N(1-\theta)} \label{T:maximal_noncomp_alt_m_phi:a_small_enough}\\
\intertext{and}
M &= mN. \nonumber
\end{align}
Let $\{f_j\}_{j = 1}^M\subseteq B_X$ be a collection of functions satisfying \eqref{T:maximal_noncomp_alt_m_phi:extremals_disj_spt}--\eqref{T:maximal_noncomp_alt_m_phi:extremals_extremality}. Note that
\begin{equation*}
\{Tf_j\}_{j = 1}^{M} \subseteq \bigcup_{k = 1}^m \big( g_k + r B_Y \big)
\end{equation*}
thanks to \eqref{T:maximal_noncomp_alt_m_phi:union} combined with the fact that $\{f_j\}_{j = 1}^M\subseteq B_X$. Therefore, by the pigeonhole principle, there has to be at least one $k_0 \in \{1, \dots, m\}$ such that
at least $M/m = N$ functions from $\{Tf_j\}_{j = 1}^{M}$ are contained in $g_{k_0} + r B_Y$. Clearly, we may assume that $\{Tf_1, \dots, Tf_N\}$ are such functions\textemdash otherwise, we would re-index the collection. Hence
\begin{equation}\label{T:maximal_noncomp_alt_m_phi:union_pigeonhole}
\|Tf_j - g_{k_0}\|_Y \leq r \quad \text{for every $j\in\{1, \dots, N\}$}.
\end{equation}
Next, for every $j\in\{1, \dots, N\}$, we define the functions $F_j$ and $h_j$ as
\begin{align*}
h_j(x) &= g_{k_0}(x) \chi_{\spt Tf_j}(x) \\
\intertext{and}
F_j(x) &= \begin{cases}
	h_j(x) \quad &\text{if $|h_j(x)| \leq |Tf_j(x)|$},\\
	Tf_j(x) \quad &\text{if $|Tf_j(x)| \leq |h_j(x)|$},
\end{cases}
\end{align*}
for every $x\in\RRR$.
Note that $\spt h_j \cup \spt F_j \subseteq \spt Tf_j$ for every $j\in\{1, \dots, N\}$. Furthermore, using \eqref{T:maximal_noncomp_alt_m_phi:extremals_disj_spt}, we observe that
\begin{equation*}
\Big| \sum_{j = 1}^N F_j \Big| \leq \Big| \sum_{j = 1}^N h_j \Big| \leq |g_{k_0}| \quad \text{$\mu$-a.e.~in $\RRR$}.
\end{equation*}
Hence
\begin{equation}\label{T:maximal_noncomp_alt_m_phi:qnorm_sum_fj_upper}
\Big\| \sum_{j = 1}^N F_j \Big\|_{Y} \leq \|g_{k_0}\|_{Y} \leq 2C\|T\|
\end{equation}
thanks to \eqref{T:maximal_noncomp_alt_m_phi:centers_not_too_big_qnorm}. Since $|F_j - Tf_j| \leq |h_j - Tf_j| \leq |g_{k_0} - Tf_j|$ $\mu$-a.e.~in $\RRR$ for every $j\in\{1, \dots, N\}$, we also have
\begin{equation}\label{T:maximal_noncomp_alt_m_phi:fj_and_Txj_close}
\|F_j - Tf_j\|_{Y} \leq \|g_{k_0} - Tf_j\|_{Y} \leq r \quad \text{for every $j\in\{1, \dots, N\}$}
\end{equation}
owing to \eqref{T:maximal_noncomp_alt_m_phi:union_pigeonhole}. Aiming to reach a contradiction with \eqref{T:maximal_noncomp_alt_m_phi:qnorm_sum_fj_upper}, we now need to distinguish between the cases~\ref{item:small_m_alt} and~\ref{item:capital_M_alt}.

First, we consider the case~\ref{item:small_m_alt} (and so \ref{item:possibility_small_m_or_infinite_measure} is the case). Note that
\begin{equation*}
\|Tf_j\|_{Y} = \|Tf_j\|_{m_\varphi} = \sup_{t\in(0, a)} (Tf_j)^*(t) \varphi(t)
\end{equation*}
thanks to \eqref{T:maximal_noncomp_alt_m_phi:extremals_small_spt}, and that
\begin{equation*}
(Tf_j)^* = (Tf_i)^* \quad \text{for every $i,j\in\{1, \dots, N\}$}
\end{equation*}
thanks to \eqref{T:maximal_noncomp_alt_m_phi:extremals_equimea}. Hence, combining these two identities with \eqref{T:maximal_noncomp_alt_m_phi:extremals_extremality} and with the fact that $r + 2\varepsilon < \lambda$, we obtain the existence of $t_0\in(0,a)$ such that
\begin{equation}\label{T:maximal_noncomp_alt_m_phi:point_of_extremality_small_m}
(Tf_j)^*(t_0) \geq \frac{r + 2\varepsilon}{\varphi(t_0)} \quad \text{for every $j\in\{1, \dots, N\}$}.
\end{equation}
Next, we claim that
\begin{equation*}
(F_j)^*((1-\theta) t_0) \geq \frac{\varepsilon}{\varphi(t_0)} \quad \text{for every $j\in\{1, \dots, N\}$}.
\end{equation*}
To this end, we need to observe two things. First, note that it follows from \eqref{T:maximal_noncomp_alt_m_phi:dilation_chosen_theta} that
\begin{equation*}
r\Big(\frac1{\varphi(t_0)} - \frac1{\varphi(\theta t_0)} \Big) \geq -\frac{\varepsilon}{\varphi(t_0)}.
\end{equation*}
Second, we have
\begin{equation*}
(Tf_j - F_j)^*(\theta t_0) \leq \frac{\|Tf_j - F_j\|_{m_\varphi}}{\varphi(\theta t_0)} \leq \frac{r}{\varphi(\theta t_0)} \quad \text{for every $j\in\{1, \dots, N\}$}
\end{equation*}
thanks to \eqref{T:maximal_noncomp_alt_m_phi:fj_and_Txj_close}. Hence, combining these two observations with \eqref{T:maximal_noncomp_alt_m_phi:point_of_extremality_small_m} and using \eqref{E:f*-subadditivity}, we obtain
\begin{align*}
(F_j)^*((1 - \theta) t_0) &\geq (Tf_j)^*(t_0) - (Tf_j - F_j)^*(\theta t_0)\\
&\geq \frac{r + 2\varepsilon}{\varphi(t_0)} - \frac{r}{\varphi(\theta t_0)}\\
&= \frac{2\varepsilon}{\varphi(t_0)} + r\Big(\frac1{\varphi(t_0)} - \frac1{\varphi(\theta t_0)} \Big) \nonumber\\
&\geq \frac{\varepsilon}{\varphi(t_0)}
\end{align*}
for every $j\in\{1, \dots, N\}$. Finally, since the functions $\{F_j\}_{j = 1}^N$ have mutually disjoint supports, the last estimate combined with \eqref{E:rearrangement_sum_disjoint_supports_lower_bound} implies that
\begin{equation*}
\Big( \sum_{j = 1}^N F_j \Big)^*(N(1 - \theta) t_0) \geq \frac{\varepsilon}{\varphi(t_0)}.
\end{equation*}
Combining this with \eqref{T:maximal_noncomp_alt_m_phi:not_too_constant_chosen_theta}, we arrive at
\begin{align*}
\Big\| \sum_{j = 1}^N F_j \Big\|_{m_\varphi} &\geq \Big( \sum_{j = 1}^N F_j \Big)^*(N(1 - \theta) t_0) \varphi(N(1 - \theta) t_0) \\
&\geq \varepsilon\frac{\varphi(N(1 - \theta) t_0)}{\varphi(t_0)} >  2C\|T\|.
\end{align*}
However, this contradicts \eqref{T:maximal_noncomp_alt_m_phi:qnorm_sum_fj_upper}. Therefore, $\bmc(T) \geq \lambda$.

It remains to consider the case~\ref{item:capital_M_alt}. Similarly to the case~\ref{item:small_m_alt}, note that
\begin{equation}\label{T:maximal_noncomp_alt_m_phi:capital_M_eq1}
(Tf_j)^{**} = (Tf_i)^{**} \quad \text{for every $i,j\in\{1, \dots, N\}$}
\end{equation}
thanks to \eqref{T:maximal_noncomp_alt_m_phi:extremals_equimea}. Furthermore, we claim that
\begin{equation}\label{T:maximal_noncomp_alt_m_phi:capital_M_eq2}
\|Tf_j\|_{M_\varphi} = \sup_{t\in(0, a)} \tff(t) (Tf_j)^{**}(t)  \quad \text{for every $j\in\{1, \dots, N\}$}.
\end{equation}
To this end, thanks to \eqref{T:maximal_noncomp_alt_m_phi:extremals_small_spt} and the monotonicity of the function $t\mapsto \tff(t)/t$, we have
\begin{equation*}
    \sup_{t\in[a, \mu(\RRR))} \tff(t) (Tf_j)^{**}(t) = \Big(\int_0^a (Tf_j)^* \Big) \sup_{t\in[a, \mu(\RRR))} \frac{\tff(t)}{t} = \tff(a) (Tf_j)^{**}(a) \leq \sup_{t\in(0, a)} \tff(t) (Tf_j)^{**}(t),
\end{equation*}
where we also used the continuity of the functions $\tff$ and $(Tf_j)^{**}$ in the inequality. Combining this with \eqref{E:quasiconcave_majorant_equal_norms_M}, we obtain
\begin{align*}
    \|Tf_j\|_{M_\varphi} &= \|Tf_j\|_{M_{\tff}} = \max\Big\{\sup_{t\in(0, a)} \tff(t) (Tf_j)^{**}(t), \sup_{t\in[a, \mu(\RRR))} \tff(t) (Tf_j)^{**}(t) \Big\} \\
    &= \sup_{t\in(0, a)} \tff(t) (Tf_j)^{**}(t)
\end{align*}
for every $j\in\{1, \dots, N\}$. Hence \eqref{T:maximal_noncomp_alt_m_phi:capital_M_eq2} is true. Therefore, it follows from \eqref{T:maximal_noncomp_alt_m_phi:extremals_extremality} combined with \eqref{T:maximal_noncomp_alt_m_phi:capital_M_eq2} and \eqref{T:maximal_noncomp_alt_m_phi:capital_M_eq1} that there is
$t_0\in(0,a)$ such that
\begin{equation*}
(Tf_j)^{**}(t_0) > \frac{r + \varepsilon}{\tff(t_0)} \quad \text{for every $j\in\{1, \dots, N\}$},
\end{equation*}
where we also used the fact that $r + \varepsilon < \lambda$. Moreover, thanks to this and \eqref{E:f**-alt}, there are sets $E_j\subseteq \spt Tf_j$, $j = 1, \dots, M$, such that $\mu(E_j) = t_0$ and
\begin{equation}\label{T:maximal_noncomp_alt_m_phi:point_of_extremality_capital_M}
    \frac1{t_0} \int_{E_j} |Tf_j| \, \d\mu > \frac{r + \varepsilon}{\tff(t_0)} \quad \text{for every $j\in\{1, \dots, N\}$}.
\end{equation}
Next, for future reference, note that the sets $E_j$, $j = 1, \dots, N$, are disjoint thanks to \eqref{T:maximal_noncomp_alt_m_phi:extremals_disj_spt}, and so we have
\begin{equation}\label{T:maximal_noncomp_alt_m_phi:capital_M_eq3}
    \mu\Big( \bigcup_{j = 1}^N E_j \Big) = \sum_{j = 1}^N \mu(E_j) = Nt_0.
\end{equation}
Now, since
\begin{equation*}
    \frac1{t_0} \int_{E_j} |Tf_j - F_j| \, \d\mu \leq (Tf_j - F_j)^{**}(t_0) \quad \text{for every $j\in\{1, \dots, N\}$}
\end{equation*}
by \eqref{E:f**-alt}, we obtain
\begin{equation*}
    \frac{\tff(t_0)}{t_0} \int_{E_j} |Tf_j - F_j| \, \d\mu \leq \|Tf_j - F_j\|_{M_{\tff}} \leq r \quad \text{for every $j\in\{1, \dots, N\}$}
\end{equation*}
thanks to \eqref{T:maximal_noncomp_alt_m_phi:fj_and_Txj_close} and \eqref{E:quasiconcave_majorant_equal_norms_M}. Therefore, combining this with \eqref{T:maximal_noncomp_alt_m_phi:point_of_extremality_capital_M}, we arrive at
\begin{align*}
    \frac1{t_0} \int_{E_j} |F_j| \, \d\mu &\geq \frac1{t_0} \int_{E_j} |Tf_j| \, \d\mu - \frac1{t_0} \int_{E_j} |Tf_j - F_j| \, \d\mu \\
    &> \frac{r + \varepsilon}{\tff(t_0)}  - \frac{r}{\tff(t_0)} = \frac{\varepsilon}{\tff(t_0)}
\end{align*}
for every $j\in\{1, \dots, N\}$. Combining this with \eqref{E:f**-alt} and \eqref{T:maximal_noncomp_alt_m_phi:capital_M_eq3}, we obtain
\begin{equation*}
    \Big( \sum_{j = 1}^N F_j \Big)^{**}(Nt_0) \geq \frac1{Nt_0} \sum_{j = 1}^N \int_{E_j} |F_j| \,\d\mu \geq \frac1{N}\frac{\varepsilon N}{\tff(t_0)} = \frac{\varepsilon}{\tff(t_0)}.
\end{equation*}
This together with \eqref{E:quasiconcave_majorant_equal_norms_M} implies that
\begin{equation}\label{T:maximal_noncomp_alt_m_phi:capital_M_eq4}
    \Big\| \sum_{j = 1}^N F_j \Big\|_{M_\varphi} \geq \varepsilon\frac{\tff(Nt_0)}{\tff(t_0)}.
\end{equation}
Assume for the moment that
 \begin{equation}\label{T:maximal_noncomp_alt_m_phi:capital_M_eq5}
     \frac{\tff(Nt_0)}{\tff(t_0)} > \frac{2}{\varepsilon}\|T\|.
 \end{equation}
 Then, thanks to this and \eqref{T:maximal_noncomp_alt_m_phi:capital_M_eq4}, we obtain
 \begin{equation*}
     \Big\| \sum_{j = 1}^N F_j \Big\|_{M_\varphi} > 2\|T\|
 \end{equation*}
 again, which contradicts \eqref{T:maximal_noncomp_alt_m_phi:qnorm_sum_fj_upper} (recall that $C = 1$ in the case~\ref{item:capital_M_alt}). Consequently, $\bmc(T) \geq \lambda$. Therefore, the entire proof will be done once we establish \eqref{T:maximal_noncomp_alt_m_phi:capital_M_eq5}. To this end, we will need to distinguish among \ref{item:possibility_small_m_or_infinite_measure}, \ref{item:possibility_capital_M_not_identity}, and \ref{item:possibility_capital_M_identity}.
 
 First, assume that \ref{item:possibility_capital_M_identity} is the case. Note that
 \begin{equation*}
     t_0 \leq Nt_0 < N a \leq \frac{\mu(\RRR)}{N} < \alpha_0
 \end{equation*}
 thanks to \eqref{T:maximal_noncomp_alt_m_phi:a_small_enough} and \eqref{T:maximal_noncomp_alt_m_phi:identity_case_choice_of_N}. Hence
 \begin{equation*}
     \frac{\tff(Nt_0)}{\tff(t_0)} = \frac{\tff(Nt_0)}{Nt_0} \frac{t_0}{\tff(t_0)} N = N > \frac{2\|T\|}{\varepsilon}
 \end{equation*}
 owing to \eqref{T:maximal_noncomp_alt_m_phi:identity_case_constantness} and \eqref{T:maximal_noncomp_alt_m_phi:identity_case_choice_of_N} again. Hence \eqref{T:maximal_noncomp_alt_m_phi:capital_M_eq5} is true when \ref{item:possibility_capital_M_identity} is the case. Next, let us turn our attention to the remaining two possibilities, i.e., \ref{item:possibility_small_m_or_infinite_measure} or \ref{item:possibility_capital_M_not_identity}. Note that
 \begin{align}
     \tff(Nt_0) &=  Nt_0 \sup_{s\in[Nt_0, \mu(\RRR))} \frac{\sup_{\tau\in(0, s]} \varphi(\tau)}{s} = t_0 \sup_{s\in[Nt_0, \mu(\RRR))} \frac{\sup_{\tau\in(0, \frac{s}{N}]} \varphi(N\tau)}{\frac{s}{N}} \nonumber\\
     &= t_0 \sup_{s\in[t_0, \frac{\mu(\RRR)}{N})} \frac{\sup_{\tau\in(0, s]} \varphi(N\tau)}{s} > \frac{2\|T\|}{\varepsilon} t_0 \sup_{s\in[t_0, \frac{\mu(\RRR)}{N})} \frac{\sup_{\tau\in(0, s]} \varphi(\tau)}{s} \label{T:maximal_noncomp_alt_m_phi:capital_M_eq6}
 \end{align}
 where in the inequality we used \eqref{T:maximal_noncomp_alt_m_phi:not_too_constant_chosen_theta} (recall that $\theta = 0$ when \ref{item:possibility_capital_M_identity} is the case) combined with the fact that $\tau \leq s < \frac{\mu(\RRR)}{N}$. Now, if $\mu(\RRR) = \infty$ (i.e., \ref{item:possibility_small_m_or_infinite_measure} is the case), then $\mu(\RRR) / N = \mu(\RRR)$, and so
 \begin{equation*}
     \tff(Nt_0) > \frac{2\|T\|}{\varepsilon} t_0 \sup_{s\in[t_0, \mu(\RRR))} \frac{\sup_{\tau\in(0, s]} \varphi(\tau)}{s} = \frac{2\|T\|}{\varepsilon} \tff(t_0),
 \end{equation*}
whence \eqref{T:maximal_noncomp_alt_m_phi:capital_M_eq5} follows. At last, assume that \ref{item:possibility_capital_M_not_identity} is the case (in particular, $\mu(\RRR) < \infty$). Note that
\begin{equation}\label{T:maximal_noncomp_alt_m_phi:capital_M_eq7}
    \tff(t_0) = t_0 \max\Bigg\{\sup_{s\in[t_0, \frac{\mu(\RRR)}{N})} \frac{\sup_{\tau\in(0, s]} \varphi(\tau)}{s}, \sup_{s\in[\frac{\mu(\RRR)}{N}, \mu(\RRR))} \frac{\sup_{\tau\in(0, s]} \varphi(\tau)}{s} \Bigg\}.
\end{equation}
We claim that
\begin{equation}
    t_0 \sup_{s\in[\frac{\mu(\RRR)}{N}, \mu(\RRR))} \frac{\sup_{\tau\in(0, s]} \varphi(\tau)}{s} < \tff(t_0). \label{T:maximal_noncomp_alt_m_phi:capital_M_eq8}
\end{equation}
To this end, note that we have $t_0< a \leq \beta_0$ thanks to \eqref {T:maximal_noncomp_alt_m_phi:a_small_enough} and \eqref{T:maximal_noncomp_alt_m_phi:non_identity_beta0}. Hence, using this observation, \eqref{T:maximal_noncomp_alt_m_phi:non_identity_jump}, and the monotonicity of the functions $\tff$ and $t\mapsto \tff(t)/t$, we have
\begin{align*}
    t_0 \sup_{s\in[\frac{\mu(\RRR)}{N}, \mu(\RRR))} \frac{\sup_{\tau\in(0, s]} \varphi(\tau)}{s} &\leq t_0 \sup_{s\in[\frac{\mu(\RRR)}{N}, \mu(\RRR))} \frac{\tff(s)}{s} = t_0 \frac{\tff(\frac{\mu(\RRR)}{N})}{\frac{\mu(\RRR)}{N}} \\
    &< t_0 \frac{\tff(\beta_0)}{\beta_0} \leq t_0 \frac{\tff(t_0)}{t_0} = \tff(t_0). 
\end{align*}
Consequently, combining \eqref{T:maximal_noncomp_alt_m_phi:capital_M_eq7} with \eqref{T:maximal_noncomp_alt_m_phi:capital_M_eq8}, we have
\begin{equation*}
    \tff(t_0) = t_0 \sup_{s\in[t_0, \frac{\mu(\RRR)}{N})} \frac{\sup_{\tau\in(0, s]} \varphi(\tau)}{s}.
\end{equation*}
Finally, plugging this into \eqref{T:maximal_noncomp_alt_m_phi:capital_M_eq6}, we obtain \eqref{T:maximal_noncomp_alt_m_phi:capital_M_eq5}, which concludes the proof.
\end{proof}

\begin{remark}\label{rem:maximal_noncomp_alt_m_phi:phi_quasiconcave}\leavevmode
\begin{enumerate}[label={\rm(\roman*)},ref={\rm(\roman*)}]
    \item The proof of the preceding theorem becomes simpler if we assume that $\varphi$ is almost quasiconcave in the case~\ref{item:capital_M_alt}. In particular, we then do not need to distinguish among the three possibilities \ref{item:possibility_small_m_or_infinite_measure}--\ref{item:possibility_capital_M_identity}. We proceed the same way as when \ref{item:possibility_small_m_or_infinite_measure} is the case until we establish \eqref{T:maximal_noncomp_alt_m_phi:capital_M_eq4}. The only difference is that instead of \eqref{T:maximal_noncomp_alt_m_phi:not_too_constant_chosen_theta} we fix $N\in\N$ such that
\begin{equation}\label{rem:maximal_noncomp_alt_m_phi:phi_quasiconcave:eq}
    \inf_{t\in(0, \mu(R)/(N(1-\theta)))} \frac{\varphi(N(1-\theta) t)}{\varphi(t)} > \frac{2 C}{\varepsilon C_\varphi} \|T\| ,
\end{equation}
where $C_\varphi\in(0, 1]$ is the constant from \eqref{E:quasiconcave_majorant_equivalence}. Combining \eqref{rem:maximal_noncomp_alt_m_phi:phi_quasiconcave:eq} with \eqref{E:quasiconcave_majorant_equivalence}, we immediately obtain \eqref{T:maximal_noncomp_alt_m_phi:capital_M_eq5}, no matter which of the three possibilities is true. Having \eqref{T:maximal_noncomp_alt_m_phi:capital_M_eq5} at our disposal, we reach a contradiction the same way as before.
\item As a straightforward application of Theorem~\ref{T:maximal_noncomp_alt_m_phi}, we can recover the maximal noncompactness of the weak Sobolev-Lorentz embedding $I\colon V_0^{m} L^{p,q}(\Omega) \to L^{\frac{np}{n-mp}, \infty}(\Omega)$, which was obtained in~\cite{Lan:22}. Here, $\Omega\subseteq \rn$ is an open bounded set, either $p\in(1, \frac{n}{m})$ and $q\in[1, \infty]$ or $p=q=1$, and $V_0^m L^{p,q}(\Omega)$ is the homogeneous Sobolev-Lorentz space consisting of $m$-times weakly differentiable functions in $\Omega$, $1\leq m < n$, whose continuation by $0$ outside $\Omega$ is $m$-times weakly differentiable in $\rn$ and whose $m$-th order weak derivates belong to the Lorentz space $L^{p,q}(\Omega)$. The Sobolev-Lorentz space $V_0^{m} L^{p,q}(\Omega)$ is endowed with the (quasi)norm $\|f\|_{V_0^m L^{p,q}(\Omega)} = \||\nabla^m f|\|_{L^{p,q}(\Omega)}$, where $\nabla^m f$ is the vector of all $m$-th order weak derivatives of $f$. Recall that the Lorentz space $L^{p,q}(\Omega)$ consists of all measurable functions $f$ on $\Omega$ such that
\begin{equation*}
    \| f \|_{L^{p,q}(\Omega)} = \|t^{\frac1{p} - \frac1{q}} f^*(t)\|_{L^q(0, |\Omega|)} < \infty.
\end{equation*}
The weak Lebesgue space $L^{\frac{np}{n-mp}, \infty}(\Omega)$ is the Marcinkiewicz space $m_\varphi(\Omega)$ corresponding to $\varphi(t) = t^\frac{n-mp}{np}$, $t\in(0, |\Omega|)$, though it may also be regarded as a Lorentz space. Now, it can be readily verified that $\varphi$ satisfies the assumptions of Theorem~\ref{T:maximal_noncomp_alt_m_phi}. Given any $\lambda\in(0, \|I\|)$, where $\|I\|$ is the norm of the embedding $I\colon V_0^{m} L^{p,q}(\Omega) \to L^{\frac{np}{n-mp}, \infty}(\Omega)$, $a\in(0, \mu(\Omega))$, and $M\in\N$, we fix a collection $B_1, \dots, B_M\subseteq \Omega$ of mutually disjoint balls such that $|B_j|\leq a$ for every $j\in\{1, \dots, M\}$. It can be easily observed that
\begin{equation*}
    \frac{\|f_\kappa\|_{L^{\frac{np}{n-mp}, \infty}(\Omega)}}{\|f_\kappa\|_{V_0^{m} L^{p,q}(\Omega)}} = \frac{\|f\|_{L^{\frac{np}{n-mp}, \infty}(\Omega)}}{\|f\|_{V_0^{m} L^{p,q}(\Omega)}}
\end{equation*}
for every $f\in V_0^{m} L^{p,q}(\Omega)$, where $f_\kappa(x) = f(\kappa x)$, and that the support of $f_\kappa$ vanishes as $\kappa\to\infty$. It follows that we can find a collection of functions $\{f_j\}_{j = 1}^M \subseteq B_{V_0^{m} L^{p,q}(\Omega)}$ such that $\spt f_j \subseteq B_j$ and $\|f_j\|_{L^{\frac{np}{n-mp}, \infty}(\Omega)} \geq \lambda$ for every $j\in\{1, \dots, M\}$, and such that the functions $\{f_j\}_{j = 1}^M$ are translated copies of each other (in particular, they are equimeasurable). Hence, the embedding $I\colon V_0^{m} L^{p,q}(\Omega) \to L^{\frac{np}{n-mp}, \infty}(\Omega)$ is maximally noncompact. Finally, note that we may replace $m_\varphi(\Omega)$ with $M_\varphi(\Omega)$ and/or, when $p\in(1, \frac{n}{m})$, we may replace $L^{p,q}(\Omega)$ with $L^{(p,q)}(\Omega)$, a variant of the Lorentz space defined by means of $f^{**}$ instead of $f^*$. The maximal noncompactness of the modified weak Sobolev-Lorentz embedding then follows in the same way.
\end{enumerate}

\end{remark}

\section{Embeddings into the space of essentially bounded functions}
\label{S:embeddings-into-l-infty}

This section contains a theorem suitable for proving lower bounds on the measure of noncompactness of embeddings into $L^\infty(\RRR, \mu)$. Although $M_\varphi(\RRR, \mu) = m_\varphi(\RRR, \mu) = L^\infty(\RRR, \mu)$ for $\varphi\equiv1$, the situation is fundamentally different from that in the previous section. Theorem~\ref{T:max-noncompact} cannot be used when the target space is the space of essentially bounded functions. In particular, assumption~\eqref{E:varphi-vanishing-at-0}, which was important for the proof to work, is not satisfied. Nevertheless, the $L^\infty$ norm gives us a lot of information about the size of functions, which the following theorem exploits.

\begin{theorem} \label{T:beta-lower-of-X-to-ell-infty}
Let $X$ be a quasinormed space such that $X \subseteq L^\infty(\RRR, \mu)$. Let $I\colon X \to L^\infty(\RRR, \mu)$ be the embedding operator. Let $\rpar\in(0, \|I\|]$ and assume that for every $\ell\in\N$, there are functions $f_i\in B_X$, $i=1,\dots,\ell$, with pairwise disjoint supports satisfying
\begin{equation} \label{E:X-norm-difference}
	\nrm{f_i-f_j}_{X} \le 1
		\quad\text{for every $i\neq j\in\{1,\dots,\ell\}$}
\end{equation}
and
\begin{equation} \label{E:u-infty-norm-large}
	\nrm{f_i}_{L^\infty} > r
		\quad\text{for every $i\in\{1,\dots,\ell\}$}.
\end{equation}
Then
\begin{equation*}
	\bmc(I)\ge\rpar.
\end{equation*}
\end{theorem}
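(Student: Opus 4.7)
The plan is to argue by contradiction, exploiting the pointwise nature of the $L^\infty$-norm on disjoint supports. I will suppose that $\bmc(I)<\rpar$, pick some $r'\in(\bmc(I),\rpar)$, and choose a finite cover $I(B_X)\subseteq\bigcup_{k=1}^K(g_k+r'B_{L^\infty})$. I then invoke the hypothesis with $\ell=2^K+1$. Because $\|f_i-f_j\|_X\leq1$, every ordered difference $f_i-f_j$ belongs to $B_X$; hence each pair $(i,j)$ with $i\neq j$ receives a color $F(i,j)\in\{1,\dots,K\}$ for which $f_i-f_j\in g_{F(i,j)}+r'B_{L^\infty}$.

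Next I will identify two forbidden same-color configurations that each deliver a contradiction. The first is an antipodal pair: if $F(i,j)=F(j,i)$ for some pair, then both $u=f_i-f_j$ and $-u$ sit in the same ball, so adding the two defining inequalities gives $\|2u\|_{L^\infty}\leq 2r'$, while disjointness of $\spt f_i,\spt f_j$ forces $\|u\|_{L^\infty}=\max(\|f_i\|_{L^\infty},\|f_j\|_{L^\infty})>r>r'$. The second is a length-two same-color path: if $F(i,j)=F(j,k)$ for three distinct indices, then subtracting the two defining inequalities yields $\|f_i-2f_j+f_k\|_{L^\infty}\leq 2r'$; but on $\spt f_j$, disjoint from $\spt f_i$ and $\spt f_k$, this function equals $-2f_j$, whose $L^\infty$-norm exceeds $2r$, again contradicting $r'<r$.

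The combinatorial closing step will be to show that these configurations cannot be simultaneously avoided when $\ell>2^K$. For each vertex $v\in\{1,\dots,\ell\}$ I set $\sigma(v)=\{F(v,j):j\neq v\}$ and $\tau(v)=\{F(i,v):i\neq v\}$. The impossibility of the two configurations above forces $\sigma(v)\cap\tau(v)=\emptyset$, because any common color $c$ would produce edges $(i,v)$ and $(v,j)$ of the same color, yielding either an antipodal pair (when $i=j$) or a length-two path (when $i\neq j$). For $v_1\neq v_2$ the color $F(v_1,v_2)$ lies in $\sigma(v_1)\cap\tau(v_2)\subseteq\sigma(v_1)\setminus\sigma(v_2)$, whence $\sigma(v_1)\neq\sigma(v_2)$. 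Thus $v\mapsto\sigma(v)$ injects $\{1,\dots,\ell\}$ into $2^{\{1,\dots,K\}}$ and $\ell\leq 2^K$, contradicting the choice $\ell=2^K+1$.

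The hardest point is isolating the two correct forbidden configurations; once these are in hand, the amplifications to $\|2u\|_{L^\infty}$ and to $\|2f_j\|_{L^\infty}$ on $\spt f_j$ work cleanly because of disjoint supports, and the hypothesis $\|f_i-f_j\|_X\leq 1$ is crucial, since it places \emph{every} ordered difference---not just the $f_i$ themselves---in the unit ball $B_X$ to which the cover applies.
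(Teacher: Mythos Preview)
Your proof is correct, but it takes a genuinely different route from the paper's.

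The paper also argues by contradiction and uses a pigeonhole argument with $\ell$ of order $2^{K}$, but the tag it attaches to each index is different. Instead of colouring ordered differences, the paper associates to each $i$ the set
\[
W_i=\bigl\{k:\ \nrm{f_i-g_k}_{L^\infty(E_i)}\le r\bigr\},
\]
where the crucial point is that the $L^\infty$-norm is restricted to $E_i=\spt f_i$. Pigeonhole then produces $i\neq j$ with $W_i=W_j$. For the single difference $h=f_i-f_j$ one finds a center $g_k$ with $\nrm{h-g_k}_{L^\infty}\le r$; since $h=f_i$ on $E_i$ this forces $k\in W_i=W_j$, whence $\nrm{f_j-g_k}_{L^\infty(E_j)}\le r$. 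But $h=-f_j$ on $E_j$, so $\nrm{h-g_k}_{L^\infty(E_j)}=\nrm{f_j+g_k}_{L^\infty(E_j)}>r$ by the triangle inequality and $\nrm{f_j}_{L^\infty}>r$; contradiction.

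Your argument, by contrast, colours every ordered pair by the covering index of $f_i-f_j$, isolates two forbidden monochromatic configurations (an antipodal pair and a length-two directed path), and then uses a neat injectivity trick ($v\mapsto\sigma(v)$) to show such colourings cannot exist once $\ell>2^K$. This is precisely the kind of combinatorial argument the paper explicitly set out to \emph{avoid} (cf.\ the closing sentence of its introduction, where the authors say they sidestep the combinatorial proof of \citet{Lan:20}). The trade-off: the paper's approach is shorter and requires only one difference $f_i-f_j$ and one center $g_k$, with the ``restricted $L^\infty$ on supports'' idea doing the work that your two configurations do; your approach is more explicitly graph-theoretic and does not need to localise the $L^\infty$-norm, but pays for this with a case analysis and a separate injectivity lemma.
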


\begin{proof}
Suppose that $\bmc(I) < \rpar$. Consequently, there are $m\in\N$, $m\geq2$, and a collection $\{g_1,\dots,g_{m-1}\}\subseteq L^\infty(\RRR, \mu)$ such that
\begin{equation} \label{E:m-1-covering}
    I(B_{X})
			\subseteq\bigcup_{k=1}^{m-1}\left(g_k+\rpar B_{L^{\infty}}\right).
\end{equation}
Set $\ell=2^m$. Let $f_1,\dots,f_\ell\in B_X$ be the collection of functions whose existence is
guaranteed by the assumptions of the theorem, and set
\begin{equation*}
E_i=\{x\in\RRR\colon f_i\ne 0\},\ i\in\{1, \dots, \ell\}.
\end{equation*}
Next, define
\begin{equation*}
W_i=\{k\in \{1,\dots,m{-}1\}\colon \|f_i-g_k\|_{L^{\infty}(E_i)} \leq r\},\quad i\in\{1,\dots,\ell\},
\end{equation*}
and note that these sets are nonempty thanks to \eqref{E:m-1-covering}. Hence, by the pigeonhole principle, there are indices $i\ne j\in\{1, \dots, \ell\}$ such that $W_i=W_j$, thanks to the fact that $m-1 < \ell$. We fix such two distinct indices. Set $h=f_i-f_j$, and note that $\|h\|_{X}\le 1$ thanks to inclusion~\eqref{E:X-norm-difference}. Hence, there is $k\in \{1,\dots,m{-}1\}$ such that
\begin{equation}\label{E:T:beta-lower-of-X-to-ell-infty:w_close_to_vk}
\|h - g_k\|_{L^{\infty}(\RRR)} \leq r
\end{equation}
owing to \eqref{E:m-1-covering}. Furthermore, note that $h\chi_{E_i} = f_i$ and $h\chi_{E_j} = -f_j$ thanks to the fact that $E_i \cap E_j = \emptyset$. We claim that $k \in W_i$. Indeed, since $h\chi_{E_i} = f_i$, we have
\begin{equation*}
\|f_i-g_k\|_{L^{\infty}(E_i)}=\|h - g_k\|_{L^{\infty}(E_i)} \leq r.
\end{equation*}
Moreover, since $W_i = W_j$, we also have
\begin{equation*}
\|f_j-g_k\|_{L^{\infty}(E_j)} \leq r.
\end{equation*}

Now, using \eqref{E:u-infty-norm-large} and writing $2f_j = (f_j + g_k) + (f_j - g_k)$, we obtain
\begin{equation*}
2r<2\|f_j\|_{L^{\infty}(E_j)}\le \|f_j+g_k\|_{L^{\infty}(E_j)}+\|f_j-g_k\|_{L^{\infty}(E_j)} \leq \|f_j+g_k\|_{L^{\infty}(E_j)} + r,
\end{equation*}
whence
\begin{equation*}
\|f_j+g_k\|_{L^{\infty}(E_j)}>r.
\end{equation*}
However, since $h\chi_{E_j} = -f_j$, it follows from the last inequality that
\begin{equation*}
\|h-g_k\|_{L^{\infty}(\RRR)}\ge\|h - g_k\|_{L^{\infty}(E_j)}=\|f_j+g_k\|_{L^{\infty}(E_j)}>r,
\end{equation*}
which contradicts \eqref{E:T:beta-lower-of-X-to-ell-infty:w_close_to_vk}.
\end{proof}

\end{document}